\theoremstyle{plain}
\numberwithin{equation}{section}
\newtheorem{theorem}{Theorem}[section]
\newtheorem{lemma}[theorem]{Lemma}
\newtheorem{definition}[theorem]{Definition}
\definecolor{darkred}{rgb}{0.8,0,0}
\definecolor{darkblue}{rgb}{0,0,0.7}
\definecolor{darkgreen}{rgb}{0,0.4,0}
\newcommand{\eps}{\varepsilon}
\newcommand{\R}{{\mathbb R}}
\newcommand{\W}{{\mathcal W}}
\newcommand{\norm}[1]{\left \| #1 \right \|}
\newcommand{\un}{{\rm 1\kern -2.5pt l}}
\newcommand{\xxi}{{\mbox{\boldmath$\xi$}}}
\def\u{\mathbf{u}}
\def\U{\mathbf{U}}
\def\V{\mathbf{V}}
\def\b{\mathbf{b}}
\def\xxi{\boldsymbol{\xi}}
\def\eps{\varepsilon}
\def\R{{\mathbb R}}
\def\eps{\varepsilon}
\def\R{{\mathbb R}}
\def\e{{\mathcal F}}
\def\F{{\mathcal F}}
\def\W{{\mathcal W}}
\def\u{\mathbf{u}}
\def\ue{\mathbf{u}_\eps}
\def\epk{{\eps_k}}
\def\uek{\mathbf{u}_\epk}
\def\wu{\mathbf{\widetilde u}}
\def\wv{\mathbf{\widetilde v}}
\def\v{\mathbf{v}}
\def\e{\mathbf{e}}
\def\z{\mathbf{z}}
\def\f{{\bf f}}
\def\v{{\bf v}}
\def\w{{\bf w}}
\def\x{{\bf x}}
\def\y{{\bf y}}
\def\z{{\bf z}}
\def\pt{\partial_t}
\def\ptt{\partial_t^2}
\newcommand{\weak}{\rightharpoonup}
\newenvironment{Assumptions}
{%

\begin{enumerate}}%
{\end{enumerate}}
\newenvironment{Definitions}
{%

\begin{enumerate}}%
{\end{enumerate}}
\newcommand{\bfeta}{{\mbox{\boldmath$\eta$}}}
  \title[Nonlinear  Peridynamic]{Wellposedness of a Nonlinear  Peridynamic Model}
\author{Giuseppe Maria Coclite}\author{Serena Dipierro}\author{Francesco Maddalena}\author{Enrico Valdinoci}
\address[Giuseppe Maria Coclite and Francesco Maddalena]{\newline
Department of Mechanics, Mathematics and Management, Polytechnic of Bari,
Via E.~Orabona 4, 70125 Bari, Italy.}
\email[]{giuseppemaria.coclite@poliba.it, francesco.maddalena@poliba.it}
\address[Serena Dipierro]{\newline
School of Mathematics and Statistics,
University of Western Australia,
35 Stirling Highway,
Crawley, WA 6009, Australia, and
Department of Mathematics, University of Milan,
Via Saldini 50, 20133 Milan, Italy.}
\email[]{serydipierro@yahoo.it}
\address[Enrico Valdinoci]{\newline
School of Mathematics and Statistics,
University of Melbourne,
813 Swanston Street, Parkville VIC 3010, Australia,
and
School of Mathematics and Statistics,
University of Western Australia,
35 Stirling Highway,
Crawley, WA 6009, Australia, and
Department of Mathematics, University of Milan,
Via Saldini 50, 20133 Milan, Italy, and
Institute for Applied Mathematics and Information Technologies,
Via Adolfo Ferrata 1, 27100 Pavia, Italy.
}
\email[]{enrico.valdinoci@gmail.com}
\date{\today}
\subjclass[2010]{49J45, 74K30, 74K35, 74R10}
\keywords{Peridynamics. Nonlocal continuum mechanics. Elasticity}
\thanks{The  authors are members of the Gruppo Nazionale per l'Analisi Matematica, la Probabilit\`a e le loro Applicazioni (GNAMPA) of the Istituto Nazionale di Alta Matematica (INdAM).
SD and EV are supported by the 
Australian Research Council
Discovery Project grant ``Nonlocal Equations at Work'' (NEW)}
\begin{document}
 \maketitle
\begin{abstract}
We consider an evolution equation inspired by a model
in peridynamics, with a singular pairwise interaction force term,
and we give global in time existence, uniqueness and stability results for the Cauchy problem.
\end{abstract}


\section{Introduction}

The exceptional achievements in materials science and the new
technological issues ask for a continuous
deepening of our understanding of the materials behavior.
Since the end of the sixties, the need to enlarge the framework of continuum mechanics in order to keep 
track of nonlocal effects was  recognized by many researchers.
More precisely, E. Kr\"oner, D. G. B. Edelen, A. C. Eringen and I. A. Kunin
(see~\cite{Er, EE, K, Ku}) laid the foundations for a
comprehensive theoretical treatment of \textit{nonlocal} mechanics.\\
More recently, in~\cite{S1} S. A. Silling  introduced \textit{peridynamics},
as a nonlocal elasticity theory: a continuum theory avoiding
spatial derivatives and incorporating spatial nonlocality (see also \cite{S, SEWX, SL, SL1}). 
Peridynamics allows  to model nonlocal interactions
through 
long-range forces, and it
is believed to be suited for the
description of a large class of physical 
phenomena which escape a classical description of continuum mechanics based on partial differential equations.
In particular, the theory of
peridynamics seems to offer a promising framework to model phenomena such as damage and fracture in solids,
evolution of phase boundaries in phase transformations,
defects, dislocations, etc.
\medskip

We now introduce the mathematical framework in which we work.
Let~$\Omega\subset \R^N$ be the rest configuration of a material body
endowed with a mass density~$\rho:\Omega\times [0,T]\rightarrow \R_+$, and
let~$\u:\Omega\times [0,T]\rightarrow \R^N$ be the displacement field
assigning at the particle having position~$\x\in \Omega$
at time~$t=0$ the new position~$\x+\u(\x, t)$ at time~$t$.
The crucial assumption of peridynamics relies in postulating the
existence of a long range internal force field, in place of the classical contact forces.
Therefore the evolution of the material body is ruled by  
the following nonlocal version of the linear momentum balance: 
\begin{align}
\label{bl}
&\rho(\x,t)\ptt \u(\x,t)=\int_{V_{\x}\cap \Omega}\f(\x,\x', \u(\x,t),\u(\x',t),t)\,d\x'+\b(\x,t),\\
\label{ic}
&\u(\cdot,0)=\u_0,\qquad\pt \u(\cdot,0)=\v_0,
\end{align}
where $V_\x\subset \R^N$ is a measurable subset with
\begin{equation}\label{delta}
\x\in V_\x,\qquad{\rm diam}(V_\x)\ge\delta>0
\end{equation}
and $\b(x,t)$ represents the external body force field.

Let us make some comments on~\eqref{bl}. Notice that the internal contact forces,
condensed in the Cauchy stress tensor 
and representing the fundamental concept in classical continuum
mechanics, here are replaced by the pairwise force 
function~$\f$ which can be thought as the density of the interaction between
the particle at~$\x$ and all the particles~$\x'$
belonging to the region~$V_\x$ (one can also assume that~$V_\x=\Omega$).
Then, whereas in the classical context we have to face with partial differential equations and the evolution problem is an initial boundary value problem, in the present context we have an integro-differential equation.\\

The fundamental issue in this setting, which constitutes the core of
the \textit{mathematical-physics wellposedness},
relies in selecting the force field~$\f$ in such a way that it satisfies
the general principles of mechanics, to capture the essential
features of the material behavior, to deliver a well posed
mathematical problem. 
In this framework, we study the Cauchy problem for an unbounded domain,
under general enough assumptions on the force field~$\mathbf f$.\medskip

Due to the balance of linear and angular momentum,
the pairwise force function~$\f$ has the direction of the vector joining~$
\x+\u(\x,t)$ to~$\x'+\u(\x',t)$, therefore we can write
\begin{equation*}
\begin{split}
\f(\x,\x',\u(\x,t),\u(\x',t),t)=&f(\x,\x',\u(\x,t),\u(\x',t),t)\,\e, \\
{\mbox{where }}\quad \e=&\frac{(\x'+\u(\x',t))-
(\x+\u(\x,t))}{|(\x'+\u(\x',t))-(\x+\u(\x,t))|}.
\end{split}
\end{equation*}
Furthermore, assuming the invariance with respect to rigid motions
and neglecting time dependence for the internal forces, we get
\begin{equation}
\label{dir}
\f (\x,\x',\u(\x,t),\u(\x',t))= \f (\x,\x', \u(\x,t)-\u(\x',t))
\end{equation}
and the
Newton law of \textit{actio et reactio} delivers
\begin{equation}
\label{Nl}
{\f} (\x,\x',-{\bfeta})=-{\f}(\x,\x',{\bfeta}).
\end{equation}
Coherently with the literature on peridynamics in this section we will often use the notation
\begin{equation*}
\xxi:=\x'-\x, \qquad\bfeta:=\u(\x',\cdot)-\u(\x,\cdot).
\end{equation*}

Let us continue by providing some examples of $\f$ in specific cases.
For a \textit{Linear Elastic Material}, we have
\begin{equation}
\label{linel}
\f(\x,\x',\bfeta):=\f_0(\x,\x')+{\mathbf C}(\x,\x'){\bfeta}
\end{equation}
By recalling \eqref{dir}, it is readily seen that the tensor~$\mathbf C$ takes the form
\begin{equation}
\label{stiff}
{\mathbf C}(\x,\x'):=\lambda(|\xxi|)\xxi\otimes\xxi
\end{equation}
where $\lambda:\R_+\rightarrow\R$ is a measurable function
such that $\lambda(r)=0$ for $r\geq \delta$ (recall that~$\delta$
is the quantity introduced in~\eqref{delta}).
The tensor~$\mathbf C$ determines the specific material and
depends on~$N$ and~$\delta$.

In the case of a \textit{Nonlinear Elastic Material} one can assume
\begin{equation}
\label{nonlin}
{\f(\x,\x',\bfeta)}:=\begin{cases}
{\varphi}\,\left(\frac{| \xxi+\bfeta|}{|\xxi|}-{1}\right)\e,&\quad \text{if}\>|\xxi|<\delta,\\
\mathbf 0,&\quad \text{if}\>|\xxi|\ge\delta,
\end{cases}
\end{equation}
where the function $\varphi$ depends
on~$s:=\left(\frac{| \xxi+\bfeta|}{|\xxi|}-{1}\right)$, which represents the 
\textit{bond stretch}, i.e. the relative change of the length of a pairwise bond.
 
The main mathematical problems in peridynamics concern well-posedness and regularity for the integral-differential
equation \eqref{bl} under the particular choice of $\f$ related to the material behavior and the analysis of the limit for vanishing nonlocality, i.e. the issue of characterizing the solution as well as the problem as $\delta\rightarrow 0$.

In the linear elastic case, well-posedness and regularity
were established in~\cite{EW}, though the problem of the
limit as~$\delta\rightarrow 0$ is still largely open.
In particular, assuming that~$\u_0,\v_0\in L^p(\Omega)$
and~$\mathbf b\in L^1(0,T;L^p(\Omega))$, the authors prove
the existence of a unique solution~$\u\in C^1([0,T];L^p(\Omega))$
to the initial-value problem~\eqref{bl}-\eqref{ic} in the linear case~\eqref{linel}.

In the case of nonlinear elasticity, the situation is more involved
and at the present time few results are known \cite{DZ, ELP, EP1, EP, EP2, EW, MD}
(the situation is even more complicated
in the classical nonlinear elastodynamics).
According to the authors of~\cite{EP}, 
the main known results
can be represented by the two theorems below related to the peridynamic operator
\begin{equation*}
(K\u)(\x):= \int_{\Omega\cap B_\delta(\x)}\f(\x'-\x,\u(\x')-\u(\x))\,d\x',
\end{equation*}
where $B_\delta(\x)$ denotes the $N$-dimensional open ball centered in $\x$ of radius $\delta$.

The first result that we recall is the following:

\begin{theorem}\label{Tv1}{\rm (\cite{EP})}
Let $\u_0,\v_0\in C(\overline{\Omega})^N$ and ${\mathbf b}\in C([0,T];
C(\overline{\Omega})^N)$. Assume that~$\f: \overline{B_\delta(\mathbf 0)}\times
\R^N\rightarrow \R^N$ is continuous and that there exists a
nonnegative function~$\ell\in L^1(B_\delta(\mathbf 0))$
such that for all $\bm\xi\in \R^N$ with $\vert \xxi\vert\leq \delta$ and $\bm\eta,\bm\eta'$ there holds
$$\vert \f(\bm\xi,\bm\eta')-\f(\bm\xi,\bm\eta)\vert\leq\,\ell(\xxi)\vert\bm\eta'-\bm\eta\vert.$$ 
Then, the peridynamic operator $K:C(\overline{\Omega})^N\rightarrow\,C
(\overline{\Omega})^N$ is well-defined and Lipschitz-continuous,
and the initial-value problem \eqref{bl}-\eqref{ic} is globally well-posed
and there exists a solution~$\u\in C^2([0,T];C(\overline{\Omega})^N)$.
\end{theorem}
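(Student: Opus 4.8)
The plan is to recast the second-order integro-differential Cauchy problem \eqref{bl}--\eqref{ic} as a first-order system in the Banach space $X:=C(\overline{\Omega})^N\times C(\overline{\Omega})^N$ and then invoke the abstract Picard--Lindel\"of (Cauchy--Lipschitz) theorem. Writing $\mathbf{w}=(\u,\pt\u)$, the system reads $\pt\mathbf{w}=F(\mathbf{w},t)$ with $F((\u,\v),t)=\bigl(\v,\,\rho^{-1}(K\u)+\rho^{-1}\mathbf{b}(\cdot,t)\bigr)$, coupled with $\mathbf{w}(0)=(\u_0,\v_0)$. The first step is to verify that $K$ maps $C(\overline{\Omega})^N$ into itself and is globally Lipschitz: this is exactly the statement of the theorem's hypothesis, and it follows because the integrand $\f(\x'-\x,\u(\x')-\u(\x))$ is continuous in $\x$ (continuity of $\f$ plus uniform continuity of $\u$ on compact-in-the-integration-variable sets), dominated in modulus by an $L^1(B_\delta(\mathbf 0))$-bound coming from $\ell$ together with $\sup_\x|\f(\x'-\x,\mathbf 0)|$, so dominated convergence gives continuity of $\x\mapsto (K\u)(\x)$; the Lipschitz estimate is $\|K\u_1-K\u_2\|_\infty\le \|\ell\|_{L^1(B_\delta(\mathbf 0))}\,\|\u_1-\u_2\|_\infty$ by the pointwise Lipschitz bound on $\f$ integrated over $\Omega\cap B_\delta(\x)$.

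The second step is to check that $F(\cdot,t)\colon X\to X$ is Lipschitz uniformly in $t\in[0,T]$, with the Lipschitz constant controlled in terms of $\|\ell\|_{L^1}$, $\|\rho^{-1}\|_\infty$, and the norm on each component; and that $t\mapsto F(\mathbf{w},t)$ is continuous, which reduces to continuity of $t\mapsto \mathbf{b}(\cdot,t)$ in $C(\overline{\Omega})^N$, assumed in the hypotheses. Since the right-hand side is globally (not merely locally) Lipschitz in the state variable, the standard contraction-mapping argument on the integral formulation $\mathbf{w}(t)=\mathbf{w}(0)+\int_0^t F(\mathbf{w}(s),s)\,ds$ — run on $C([0,\tau];X)$ for $\tau$ small enough that $\tau\,\mathrm{Lip}(F)<1$, then iterated over $[0,T]$ in finitely many steps of uniform length — yields a unique global solution $\mathbf{w}\in C^1([0,T];X)$, equivalently $\u\in C^1([0,T];C(\overline{\Omega})^N)$ with $\pt\u\in C^1$, hence $\u\in C^2([0,T];C(\overline{\Omega})^N)$.

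For the final $C^2$-regularity claim one bootstraps: once $\u\in C^1([0,T];C(\overline{\Omega})^N)$ is known, $t\mapsto (K\u(\cdot,t))(\x)$ is $C^1$ in $t$ because $K$ is (globally Lipschitz, hence) continuous and in fact the map $\u\mapsto K\u$ composed with a $C^1$ curve is differentiable with $\frac{d}{dt}K\u(t)=DK(\u(t))[\pt\u(t)]$ in the relevant weak sense — but more simply, from the equation $\ptt\u=\rho^{-1}(K\u)+\rho^{-1}\mathbf{b}$ the right-hand side is a continuous function of $t$ valued in $C(\overline{\Omega})^N$ (composition of the continuous map $K$ with the continuous curve $\u$, plus the continuous curve $\mathbf{b}$), so $\ptt\u\in C([0,T];C(\overline{\Omega})^N)$, i.e. $\u\in C^2$.

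\textbf{Main obstacle.} The routine parts are the contraction estimates; the step that needs genuine care is \emph{step one}, namely showing $K\u\in C(\overline{\Omega})^N$ — one must handle simultaneously the dependence of the integrand on $\x$ through the first slot $\x'-\x$, through the second slot $\u(\x')-\u(\x)$, \emph{and} through the domain of integration $\Omega\cap B_\delta(\x)$, which varies with $\x$. A clean way is to extend $\f(\cdot,\bm\eta)$ by $\mathbf{0}$ outside $\overline{B_\delta(\mathbf 0)}$ so the integral is over all of $\Omega$ with a fixed domain, then apply the dominated convergence theorem using the integrable majorant $\ell(\x'-\x)\cdot 2\|\u\|_\infty+\sup_{|\bm\xi|\le\delta}|\f(\bm\xi,\mathbf 0)|\cdot\un_{B_\delta(\x)}(\x')$; uniform continuity of $\u$ on $\overline{\Omega}$ (if $\overline{\Omega}$ is compact) or on the relevant neighborhoods makes the pointwise convergence of integrands along $\x_n\to\x$ immediate. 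Beyond that, verifying that the finitely-many-steps continuation is legitimate (the Lipschitz constant of $F$ does not depend on the solution, so the time-step is uniform) is the only conceptual point; everything else is bookkeeping.
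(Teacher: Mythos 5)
The paper itself contains no proof of Theorem~\ref{Tv1}: it is quoted verbatim from \cite{EP} as background, so there is no in-paper argument to compare yours against. On its own merits, your proof is the standard argument underlying the cited result: rewrite \eqref{bl}--\eqref{ic} as a first-order ODE in the Banach space $C(\overline{\Omega})^N\times C(\overline{\Omega})^N$, check that $K$ is well defined and globally Lipschitz on $C(\overline{\Omega})^N$, and apply Picard--Lindel\"of with the uniform-step continuation that global Lipschitzness allows; the $C^2$ claim then follows by reading the regularity of $\ptt\u$ off the equation. This is correct in outline and in most details.

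Three small repairs. First, in the continuity-of-$K\u$ step (which you rightly single out as the delicate one), the dominating function you propose depends on the moving point: along $\x_n\to\x$ the integrand is bounded by $2\norm{\u}_\infty\,\ell(\x'-\x_n)+C\,\un_{B_\delta(\x_n)}(\x')$, and since $\ell$ is merely $L^1$ this is not dominated by the same expression with $\x$ in place of $\x_n$, so the dominated convergence theorem does not apply as written. The standard fix is to change variables $\y=\x'-\x$, writing $(K\u)(\x)=\int_{B_\delta(\mathbf 0)}\un_{\Omega}(\x+\y)\,\f\big(\y,\u(\x+\y)-\u(\x)\big)\,d\y$, so the majorant $2\norm{\u}_\infty\,\ell(\y)+\sup_{|\bm\xi|\le\delta}|\f(\bm\xi,\mathbf 0)|$ is fixed and the integrands converge a.e.\ (alternatively, use continuity of translations in $L^1$ to handle $\ell(\cdot-\x_n)$). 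Second, the Lipschitz estimate for $K$ carries a factor $2$, $\norm{K\u_1-K\u_2}_\infty\le 2\norm{\ell}_{L^1(B_\delta(\mathbf 0))}\norm{\u_1-\u_2}_\infty$, since the integrand involves both $\u_i(\x')$ and $\u_i(\x)$; this is harmless but should be stated correctly. Third, your right-hand side involves $\rho^{-1}$, so you are implicitly assuming $\rho$ continuous and bounded away from zero (or normalized to $1$, as in the quoted statement); say so, since otherwise $F$ need not map $C(\overline{\Omega})^N\times C(\overline{\Omega})^N$ into itself.
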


Notice that, as remarked by the authors of~\cite{EP},  
global Lipschitz-continuity of the pairwise force function
with respect to~$\bm\eta$ is quite a
restrictive assumption since it implies linearly bounded growth.

The next result that we recall
regards the existence of weak solutions.
For this, before stating the theorem, we need some definitions.
We denote by~$(\cdot,\cdot)$ the inner product in~$L^2(\Omega)^N$
and we consider a Banach space~$X\subset L^2(\Omega)^N$.
Moreover, $K:X\rightarrow X^*$ is the energetic extension of
the peridynamic operator, namely
$$\langle K\w,\z\rangle:=
\frac{1}{2}\int_{\Omega\times\Omega}
a(\vert\bm \xi\vert, \vert\bm\eta\vert)(\w(\x')-\w(\x))\cdot(\z(\x')-\z(\x))\,d(\x',\x),$$
for every~$\w$, $\z\in X$.

With this notation, we say that $\u:[0,T]\rightarrow X$ satisfying
the initial conditions~\eqref{ic} is a weak solution for~\eqref{bl} if,
for every~$\varphi\in\,C_0^\infty(0,T)$ and every~$\z\in X$, 
$$-\int_0^T\pt\varphi'(t)(\pt\u(t),\z)\,dt-\int_0^T\langle K\u(t),\z\rangle \varphi(t)\,dt
=\int_0^T({\mathbf b}(t),\z)\varphi(t)\,dt.$$

In the following, the basic function space is the Sobolev-Slobodeckij space~$X= W^{\sigma,p}(\Omega)^N$ with~$0<\sigma<1$ and~$2\leq p<+\infty$,
and~$C_w([0,T]; X)$ denotes the space of the functions~$v:[0,T]\rightarrow X$
which are continuous with respect to the weak convergence in~$X$. In this setting, we have:

\begin{theorem}\label{Tv2}{\rm (\cite{EP2})}
Assume that~${\mathbf b}\in L^1(0,T;L^2(\Omega)^N)$,
$\u_0\in W^{\sigma,p}(\Omega)^N$, $\v_0\in L^2(\Omega)^N$
and the pairwise force function~$\f$
satisfies suitable growth and regularity conditions.

Then, there exists a function $\u:[0,T]\rightarrow W^{\sigma,p}(\Omega)^N$
with
$$\u\in C_w([0,T]; W^{\sigma,p}(\Omega)^N),\:\:
\pt\u\in C_w([0,T]; L^2(\Omega)^N),\:\:
\ptt\u\in L^1(0,T; (W^{\sigma,p}(\Omega)^N)^*),$$
such that
$$\ptt\u-K\u=\mathbf b\:\:\:\hbox{in}\:\:L^1(0,T; (W^{\sigma,p}(\Omega)^N)^*)$$
and $\u(\cdot,0)=\u_0$ in $W^{\sigma,p}(\Omega)^N$, $\pt\u(\cdot,0)=\v_0$ in 
$L^2(\Omega)^N$.
\end{theorem}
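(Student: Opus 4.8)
The plan is to build the weak solution by a Faedo--Galerkin approximation and then pass to the limit, the crux being a single energy estimate that controls $\pt\u$ in $L^2(\Omega)^N$ and, simultaneously, the Gagliardo seminorm of $\u$ in $W^{\sigma,p}(\Omega)^N$. First I would fix a Galerkin basis $\{\w_k\}_{k\in\N}$ of the separable space $X:=W^{\sigma,p}(\Omega)^N$, put $X_m:=\mathrm{span}\{\w_1,\dots,\w_m\}$, and seek $\u_m(t)=\sum_{k=1}^m c^m_k(t)\,\w_k$ solving the projected problem
\begin{equation*}
(\ptt\u_m,\w_k)+\langle K\u_m,\w_k\rangle=(\b,\w_k),\qquad k=1,\dots,m,
\end{equation*}
with $\u_m(0)\to\u_0$ in $X$ and $\pt\u_m(0)\to\v_0$ in $L^2(\Omega)^N$. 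Under the assumed growth and regularity of $\f$, the map $\u\mapsto K\u$ is continuous from $X_m$ into $X_m^*$ with controlled growth, so the Carath\'eodory theorem produces a local-in-time solution of this finite-dimensional second-order system.

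The key step is the a priori estimate. Testing the projected equation with $\pt\u_m$ (i.e.\ multiplying the $k$-th equation by $(c^m_k)'(t)$ and summing over $k$) and using that the pairwise force derives from a pairwise potential $W$, so that $\langle K\u_m,\pt\u_m\rangle=\frac{d}{dt}\,\mathcal W(\u_m)$ with $\mathcal W(\u):=\int_{\Omega\times\Omega}W\bigl(|\xxi|,|\bfeta|\bigr)\,d(\x',\x)$, one obtains the energy identity
\begin{equation*}
\frac{d}{dt}\Bigl(\tfrac12\|\pt\u_m(t)\|_{L^2}^2+\mathcal W(\u_m(t))\Bigr)=(\b(t),\pt\u_m(t)).
\end{equation*}
Integrating in time, using $\b\in L^1(0,T;L^2(\Omega)^N)$ and Gr\"onwall's inequality, yields bounds, uniform in $m$, for $\pt\u_m$ in $L^\infty(0,T;L^2(\Omega)^N)$ and for $\mathcal W(\u_m)$ in $L^\infty(0,T)$; the coercivity half of the growth hypothesis on $W$ converts the latter into a bound for $[\u_m(t)]_{W^{\sigma,p}}$, and integrating $\pt\u_m$ from $\u_m(0)$ controls the remaining Lebesgue part of the $W^{\sigma,p}$ norm. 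In particular the Galerkin solutions do not blow up and are global; plugging them into the equation and using the growth bound on $\f$ then bounds $\ptt\u_m$ in $L^1(0,T;X^*)$.

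Finally I would extract a subsequence with $\u_m\ws\u$ in $L^\infty(0,T;X)$ and $\pt\u_m\ws\pt\u$ in $L^\infty(0,T;L^2(\Omega)^N)$, upgrade this by an Aubin--Lions--Simon argument to $\u_m\to\u$ strongly in $C([0,T];L^2_{\mathrm{loc}})$ and a.e.\ in $\Omega\times[0,T]$, pass to the limit in $\langle K\u_m,\z\rangle$ for $\z\in X_{m_0}$ via continuity of $\f$ and dominated convergence controlled by the growth bound, let $m_0\to\infty$, and recover the initial data together with the weak continuities $\u\in C_w([0,T];X)$ and $\pt\u\in C_w([0,T];L^2(\Omega)^N)$ in the usual way from the uniform bounds and the limit equation. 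The main obstacle is exactly this passage to the limit: since $\Omega$ is unbounded, the embedding $W^{\sigma,p}\hookrightarrow L^2$ fails to be compact, so the strong convergence available is only local and must be supplemented by a uniform tail estimate --- bounding $\int_{\Omega\times\Omega}\bigl|\f(\xxi,\bfeta_m)\cdot(\z(\x')-\z(\x))\bigr|\,d(\x',\x)$ over the region where $|\x|$ or $|\x'|$ is large by the decay of $\z$ against the growth of $\f$ --- in order to identify the weak limit of the nonlinear term; checking that the precise growth and regularity conditions on $\f$ simultaneously deliver the energy coercivity and this tail control is the remaining technical burden.
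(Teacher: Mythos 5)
Note first that the paper does not prove Theorem~\ref{Tv2}: it is quoted background from \cite{EP2}, and the authors' own existence result, Theorem~\ref{th:existence}, concerns a different setting ($\R^N$ instead of a domain $\Omega$, and a singular non-Lipschitz force) and is proved by a different approximation. Your Faedo--Galerkin plan --- projected second-order ODE system, energy identity from testing with $\partial_t\mathbf{u}_m$, uniform bounds for $\partial_t\mathbf{u}_m$ in $L^\infty(0,T;L^2)$ and for the Gagliardo seminorm via coercivity of the potential, then weak-$*$ limits, Aubin--Lions, and identification of the nonlinear term --- is essentially the strategy of the cited source, so it is a legitimate route to the stated result. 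By contrast, the machinery of the present paper regularizes the equation with a fourth-order viscosity $\varepsilon\Delta^2$, obtains smooth global approximations from semigroup theory, derives the conserved energy (Lemma~\ref{lm:enest}) plus an $L^2$ growth estimate (Lemma~\ref{lm:l2}), and passes to the limit using the compact embedding of the modified space $\mathcal{W}$ into $L^2_{\rm loc}$ (Lemma~\ref{eq:Sobolev2}) together with a Vitali uniform-integrability argument (Lemma~\ref{lm:Sobolev1}). The Galerkin route needs no well-posedness theory beyond Carath\'eodory ODE existence but requires a basis of $W^{\sigma,p}(\Omega)^N$ and finite-dimensional projections; the viscosity route yields smooth approximations on all of $\R^N$ at once at the price of invoking semigroup theory. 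Both hinge on the same two pillars: the energy estimate and the identification of the weak limit of $K\mathbf{u}_m$.

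Three cautions on your sketch. First, with the convention $\partial_t^2\mathbf{u}-K\mathbf{u}=\mathbf{b}$ the projected equation should read $(\partial_t^2\mathbf{u}_m,\mathbf{w}_k)-\langle K\mathbf{u}_m,\mathbf{w}_k\rangle=(\mathbf{b},\mathbf{w}_k)$, and correspondingly $-\langle K\mathbf{u}_m,\partial_t\mathbf{u}_m\rangle=\frac{d}{dt}\mathcal{W}(\mathbf{u}_m)$; as written, your signs would flip the energy identity. Second, in Theorem~\ref{Tv2}, i.e.\ in \cite{EP2}, $\Omega$ is bounded, so the embedding $W^{\sigma,p}(\Omega)\hookrightarrow\hookrightarrow L^p(\Omega)$ is compact globally and no tail estimate is required; the unbounded-domain difficulty you describe is the one faced by the present paper on $\R^N$, where it is handled by compactly supported test functions and the space $\mathcal{W}$. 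Third, the decisive step is the one you pass over most quickly: ``a.e.\ convergence plus dominated convergence controlled by the growth bound'' must be made quantitative, e.g.\ uniform $L^{p/(p-1)}$-integrability of $\mathbf{f}(\bm{\xi},\bm{\eta}_m)$ obtained by H\"older against the uniformly bounded Gagliardo seminorms, so that Vitali's theorem applies as in the proof of Lemma~\ref{lm:Sobolev1}. This is exactly where the unspecified ``suitable growth and regularity conditions'' of \cite{EP2} are consumed; without such structure the Galerkin limit yields in general only a measure-valued solution, which is why \cite{EP2} treats the two notions separately.
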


The goal of this paper is to provide:
\begin{itemize}
\item {\bf existence} results in the spirit of Theorems~\ref{Tv1}
and~\ref{Tv2}, but which are valid for singular and non-Lipschitz interaction force,
\item {\bf uniqueness} and {\bf stability} results.
\end{itemize}
The complete description of the mathematical setting in which
we work will be given in the forthcoming Section~\ref{sec:math}, but, for simplicity,
we mention here that the interaction force  we can take into account comprises, among the
others, examples of the form
$$ \f(\bm\xi,\bm\eta)=\frac{|\bm\eta|^{p-2}\,\bm\eta}{|\bm\xi|^{N+\alpha p}}+\bm{\psi}(\bm\xi,\bm\eta),$$
where~$p\ge2$, $\alpha\in(0,1)$ and $\bm\psi$ plays the role of a ``sufficiently smooth perturbation'', e.g.
$$\bm\psi(\bm\xi,\bm\eta)=\sum_{i=1}^N\sin\bm\xi_i\,\cos\bm\eta_i.$$
In this framework, we establish that 
\medskip

\noindent\fbox{ 
\parbox{\textwidth}{
{\em
for every  initial datum with finite energy,
the Cauchy problem admits at least one weak solution 
whose energy at time~$t$ is bounded by the initial one}. }}
\medskip

A precise statement for this existence result will be given in Theorem~\ref{th:existence}.

In the case~$p=2$, we also provide
a uniqueness and stability result. Namely, 
\medskip

\noindent\fbox{ 
\parbox{\textwidth}{
{\it
if $\u$ and $\wu$ are weak solutions
with finite energy initial data, then the quantity
$$ \norm{\pt\u(\cdot,t)-\pt\wu(\cdot,t)}_{L^2(\R^N)}^2+
\int_{\R^N}\int_{B_\delta(\mathbf 0)}
\frac{|\u(\x,t)-\u(\x-\y,t)-\wu(\x,t)+\wu(\x-\y,t)|^2}{|\y|^{N+2\alpha}}\,d\x\, d\y$$
is bounded, up to constants, by the same quantities at the initial time,
multiplied by an exponential in time}. 
}}\medskip

In particular, if the initial data coincide,
the two solutions must coincide as well. A detailed statement for this result
will be given in Theorem~\ref{th:stability}.\medskip

It is interesting to point out that
the functional spaces in which we work allow, in principle,
singular functions (see Appendix~B in~\cite{PAT}).

The rest of the paper is organized as follows. In Section \ref{sec:math} we 
state precisely the problem that we study, the assumptions and the main results.
Section \ref{sec:exist} is devoted to the proof of the existence of weak solutions. The uniqueness and stability of such solutions is proved in Section \ref{sec:stab}.

\section{Statement of the problem and main results}
\label{sec:math}

This section is devoted to the rigorous mathematical
formulation of the problem, the definition of weak solutions,
and the statements of the main results of the paper.
To keep  the analysis of the mathematical problem clear we assume $\b\equiv\mathbf 0$.

\subsection{Set-up of the problem and assumptions}
\label{subsec:math.1}
We consider the  Cauchy problem
\begin{equation}
\label{eq:CP}
\begin{cases}
\ptt \u(\x,t)=(K\u(\cdot,t))(\x),&\quad \x\in \R^N,\>t>0,\\
\u(\x,0)=\u_0(\x),\>\pt \u(\x,0)=\v_0(\x),&\quad \x\in\R^N,
\end{cases}
\end{equation}
where
\begin{equation}
\label{eq:operator}
(K\u)(\x):= \int_{B_\delta(\x)}\f(\x'-\x,\u(\x')-\u(\x))\,d\x',\quad {\mbox{ for every }}\,\x\in\R^N,
\end{equation}
for a given $\delta>0$.
Here, the $\R^N$ valued  function $\f$ is defined on the set 
\begin{equation*}
\Omega:=(\R^{N}\setminus\{\mathbf 0\})\times\R^N
\end{equation*}
and we assume that
\begin{Assumptions}
\item \label{ass:f.1} $\f\in C^1(\Omega;\R^N)$;
\item \label{ass:f.2} $\f(-\y,-\u)=-\f(\y,\u),$ for every $(\y,\,\u)\in\Omega\times \R^N$;
\item \label{ass:f.5} the material is hyperelastic, i.e., there exists a function $\Phi\in C^2(\Omega)$ such that
\begin{equation*}
 \f=\nabla_{\u}\Phi,\qquad \Phi(\y,\u)=\kappa \frac{|\u|^p}{|\y|^{N+\alpha p}}+
 \Psi(\y,\u),\quad {\mbox{ for every }}\, (\y,\,\u)\in\Omega,
\end{equation*}
where $\kappa,\, p,\,\alpha$ are constants such that
\begin{equation*}
\kappa>0,\qquad 0<\alpha<1,\qquad   p\ge 2,
\end{equation*}
and 
\begin{align*}
& \Psi(\y,\mathbf 0)=0\le \Psi(\y,\u),\\&
|\nabla _\u \Psi(\y,\u)|, |D^2_{\u}\Psi(\y,\u)|\le g(\y),\quad {\mbox{ for every }}\,(\y,\,\u)\in\Omega,
\end{align*}
for some nonnegative function~$g\in L^2_{\rm{loc}}(\R^N)$.
\end{Assumptions}

We notice that if~$\Psi(-\y,-\u)=\Psi(\y,\u)$, then~\ref{ass:f.2} holds true. In addition,
Assumption \ref{ass:f.5} can be easily generalized to the anisotropic
case by taking
\begin{equation*}
\Phi(\y,\u):=({\mathbb K}\u\cdot\u) \frac{|\u|^{p-2}}{|\y|^{N+\alpha p}}
+ \Psi(\y,\u),\quad {\mbox{ for every }}\,(\y,\,\u)\in\Omega\times \R^N,
\end{equation*}
where ${\mathbb K}\in\R^{N\times N}$ is a positive definite matrix.
Here, we stick to assumption~\ref{ass:f.5} for the sake of simplicity.

We observe that \ref{ass:f.1}, \ref{ass:f.2}, \ref{ass:f.5} are the only constitutive assumptions characterizing the peridynamic model and they are sufficient to prove global well-posedness of the problem,
as it is shown in Theorems \ref{th:existence} and \ref{th:stability} below.
We emphasize that here, in contrast to classical (local) elastodynamics neither polyconvexity or null condition (see \cite{B,Sid}) are required to guarantee existence of global (in time) solutions.

Setting $\y=\x'-\x$ and using \ref{ass:f.2},
we will often rewrite the operator $K$ in~\eqref{eq:operator}
as follows
\begin{equation}
\label{eq:operator1}
(K\u)(\x)= -\int_{B_\delta(\mathbf 0)}\f(\y,\u(\x)-\u(\x-\y))d\y,\quad {\mbox{ for every }}\,\x\in\R^N.
\end{equation}
Moreover, due to \ref{ass:f.5}, we can also rewrite
\begin{equation}
\label{eq:operator2}
 \f(\y,\u)=\kappa p \frac{\u|\u|^{p-2}}{|\y|^{N+\alpha p}}+
 \nabla_{\u}\Psi(\y,\u),\quad {\mbox{ for every }}\, (\y,\,\u)\in\Omega.
\end{equation}
As a consequence, in virtue of~\eqref{eq:operator1} and~\eqref{eq:operator2},
we have that
\begin{equation}
\label{eq:operator3}
\begin{split}
(K\u)(\x)= &-\kappa p \int_{B_\delta(\mathbf 0)}\frac{(\u(\x)-\u(\x-\y))
|\u(\x)-\u(\x-\y)|^{p-2}}{|\y|^{N+\alpha p}}\,d\y
\\& -\int_{B_\delta(\mathbf 0)}\nabla_{\u}\Psi(\y,\u(\x)-\u(\x-\y))\,d\y.
\end{split}
\end{equation}
Furthermore, the energy associated to \eqref{eq:CP} is
\begin{equation}
\label{eq:enert}
E[\u](t):= \frac{\norm{\pt \u(\cdot,t)}^2_{L^2(\R^N)}}{2}
+\frac{1}{2}\int_{\R^N}\int_{B_\delta(\mathbf 0)}
\Phi(\y,\u(\x,t)-\u(\x-\y,t))\,d\x\, d\y,
\end{equation}
see the forthcoming proof of Lemma~\ref{lm:enest} below. 
Also, the equation in~\eqref{eq:CP} is the Euler-Lagrange equation
of the action functional
\begin{equation}
\label{eq:action}
\u\mapsto \int_0^\infty\int_{\R^N}\left(\frac{(\pt \u)^2}{2}
-\frac{1}{2}\int_{B_\delta(\mathbf 0)}\Phi(\y,\u(\x,t)-\u(\x-\y,t))\,d\y\right)\,dt\,d\x.
\end{equation}

\subsection{Fractional Sobolev spaces}
\label{subsec:math.2}

Let $\alpha$ and $p$ be the parameters introduced in~\ref{ass:f.5}.
We remind that the fractional Sobolev space $W^{\alpha,p}$ is defined
through the norm
\begin{equation*}
\norm{\u}_{W^{\alpha,p}(\R^N;\R^N)}:=
\left(\int_{\R^N} |\u|^pd\x+\int_{\R^N}\int_{\R^N}
\frac{|\u(\x)-\u(\x-\y)|^p}{|\y|^{N+\alpha p}}\,d\x\, d\y\right)^{1/p}
\end{equation*}
(see \cite[Section 2]{DPV}),
and the following compact embedding holds
\begin{equation*}
W^{\alpha,p}(\R^N;\R^N)\hookrightarrow\hookrightarrow L^q_{\rm{loc}}(\R^N;\R^N),
\qquad 1\le q\le p,
\end{equation*}
(see \cite[Theorem 7.1]{DPV}).

In this paper,
we use a slight modification of the fractional Sobolev space~$W^{\alpha,p}$.
Namely, we consider the space~$\W$ defined through the norm
\begin{equation}
\label{eq:Sobolev1}
\norm{\u}_{\W}:= \norm{\u}_{L^2(\R^N;\R^N)}+
\left(\int_{\R^N}\int_{B_\delta(\mathbf 0)}
\frac{|\u(\x)-\u(\x-\y)|^p}{|\y|^{N+\alpha p}}\,d\x\, d\y\right)^{1/p}.
\end{equation}
Then, the following compact embedding  can be proved:

\begin{lemma}
\label{eq:Sobolev2}
$\W\hookrightarrow\hookrightarrow L^2_{\rm{loc}}(\R^N;\R^N)$.
\end{lemma}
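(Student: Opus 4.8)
The plan is to prove the compact embedding $\W\hookrightarrow\hookrightarrow L^2_{\rm loc}(\R^N;\R^N)$ by combining the local compactness of the classical fractional Sobolev embedding with a careful treatment of the fact that the seminorm in $\W$ only integrates over increments $\y\in B_\delta(\mathbf 0)$ rather than over all of $\R^N$. Let me outline the steps.

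\textbf{Step 1: Reduction to a ball.} It suffices to show that, for every fixed bounded open set $U\subset\R^N$ (say $U=B_R(\mathbf 0)$), the restriction map $\W\ni\u\mapsto\u|_U\in L^2(U;\R^N)$ is compact. So I take a bounded sequence $(\u_k)$ in $\W$, i.e. $\sup_k\norm{\u_k}_{\W}\le M$, and must extract a subsequence converging strongly in $L^2(U;\R^N)$. In particular $(\u_k)$ is bounded in $L^2(\R^N;\R^N)$ and $\sup_k\iint_{\R^N\times B_\delta(\mathbf 0)}\frac{|\u_k(\x)-\u_k(\x-\y)|^p}{|\y|^{N+\alpha p}}\,d\x\,d\y\le M^p$.

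\textbf{Step 2: Recover a full fractional seminorm on a slightly smaller ball.} The key point is that controlling increments only for $|\y|<\delta$ still controls increments for all $\y$, \emph{locally}, by a chaining/iteration argument: any two points $\x,\x'\in U$ with $|\x-\x'|$ arbitrary can be joined by a chain of at most $m=\lceil 2R/\delta\rceil+1$ intermediate points at mutual distance $<\delta$, all lying in a fixed enlarged ball $U'=B_{R+\delta}(\mathbf 0)$. Using this together with the elementary inequality $|a_1+\dots+a_m|^p\le m^{p-1}(|a_1|^p+\dots+|a_m|^p)$ and a change of variables, one shows
\begin{equation*}
\int_{U}\int_{U}\frac{|\u_k(\x)-\u_k(\x')|^p}{|\x-\x'|^{N+\alpha p}}\,d\x\,d\x'
\le C(N,R,\delta,\alpha,p)\int_{\R^N}\int_{B_\delta(\mathbf 0)}\frac{|\u_k(\x)-\u_k(\x-\y)|^p}{|\y|^{N+\alpha p}}\,d\x\,d\y,
\end{equation*}
where on the left-hand side I used that $|\x-\x'|\le 2R$ is bounded so the weight $|\x-\x'|^{-(N+\alpha p)}$ is comparable, up to constants depending on $R$ and $\delta$, to what one gets from the chained increments. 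Hence $(\u_k)$ is bounded in the genuine fractional Sobolev space $W^{\alpha,p}(U;\R^N)$ (using also the $L^2\supset L^p$-on-bounded-sets... actually one needs $L^p(U)$ boundedness: since $p\ge2$ and $U$ is bounded this does \emph{not} follow from $L^2$ boundedness, so I instead get $L^p_{\rm loc}$ control from the $W^{\alpha,p}(U)$-seminorm bound plus the $L^2(U)$ bound via a Poincaré–Sobolev type inequality on $U$, i.e. $\norm{\u}_{L^p(U)}\le C(\|\u\|_{L^2(U)}+[\u]_{W^{\alpha,p}(U)})$, which holds for bounded Lipschitz $U$).

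\textbf{Step 3: Apply the classical compact embedding and diagonalize.} By \cite[Theorem 7.1]{DPV}, $W^{\alpha,p}(U;\R^N)\hookrightarrow\hookrightarrow L^q(U;\R^N)$ for $1\le q\le p$; in particular for $q=2$ (legitimate since $2\le p$). Therefore the bounded sequence $(\u_k)$ in $W^{\alpha,p}(U;\R^N)$ has a subsequence converging strongly in $L^2(U;\R^N)$. Running this over an exhausting sequence of balls $U=B_R(\mathbf 0)$, $R=1,2,3,\dots$, and taking a diagonal subsequence, I obtain a subsequence of $(\u_k)$ that converges strongly in $L^2_{\rm loc}(\R^N;\R^N)$, which is exactly the claim.

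\textbf{Main obstacle.} The only genuinely non-routine point is Step 2: passing from the ``$|\y|<\delta$ only'' increment integral to a full fractional seminorm on a ball. The chaining argument is standard in spirit but one must be careful that (i) all intermediate chain points stay in a fixed bounded set so that the weight $|\y|^{-(N+\alpha p)}$ stays bounded below, and (ii) the change of variables introduces only constants depending on $N,R,\delta,\alpha,p$ and not on $k$. Everything else — the reduction to bounded sets, the $L^p(U)$ control, and the final diagonal extraction — is routine given \cite[Theorem 7.1]{DPV}.
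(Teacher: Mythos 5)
Your argument is correct, but it follows a genuinely different route from the paper. You keep the exponent $p$ and enlarge the range of increments: a doubling/chaining argument (which indeed works as you claim, because the right-hand side integrates $\x$ over all of $\R^N$, so the splitting $\u(\x)-\u(\x-\y)=(\u(\x)-\u(\x-\y/2))+(\u(\x-\y/2)-\u(\x-\y))$ plus a change of variables can be iterated to pass from $|\y|<\delta$ to $|\y|<2R$) recovers the full $W^{\alpha,p}(B_R)$ seminorm, and you correctly spot and repair the $L^2$-versus-$L^p$ mismatch via $\norm{\u}_{L^p(U)}\le C\left(\norm{\u}_{L^2(U)}+[\u]_{W^{\alpha,p}(U)}\right)$, which does hold on bounded sets (Jensen applied to $\u-\bar\u_U$ gives $\norm{\u-\bar\u_U}_{L^p(U)}\le C[\u]_{W^{\alpha,p}(U)}$, and the mean is controlled by the $L^2$ norm); then you apply \cite[Theorem 7.1]{DPV} with exponent $p$ and diagonalize. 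The paper instead sidesteps both of your delicate points: it works on balls of radius $\delta/2$, where every pair of points is automatically within distance $\delta$ so the restricted increment integral already dominates, and uses H\"older with exponents $\frac p2$, $\frac p{p-2}$ to trade the exponent $p$ for $2$ at the cost of a small loss of smoothness ($\alpha\mapsto\alpha-\lambda$), obtaining boundedness in $W^{\alpha-\lambda,2}(B_{\delta/2}(\mathbf z))$ and applying \cite[Theorem 7.1]{DPV} there, then covering a compact set by finitely many such balls. The paper's route is shorter and avoids chaining and the $L^p$ issue entirely; yours is more work in Step~2 but yields the slightly stronger intermediate conclusion of local boundedness in $W^{\alpha,p}$, hence compactness in $L^q_{\rm loc}$ for all $q\le p$, not just $q=2$.
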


\begin{proof} One can
argue as in~\cite[Theorem 7.1]{DPV}, or make the following observations.
We suppose~$p>2$ (the case~$p=2$ following directly from~\cite[Theorem 7.1]{DPV}).
Fix~$\lambda\in(0,\alpha)$ and a bounded domain~${\mathcal{K}}\subset\R^N$. We write
\begin{align*} N+2(\alpha-\lambda) &= \frac{2(N+\alpha p)}{p}+
\frac{p(N+2(\alpha-\lambda))-2(N+\alpha p)}{p}\\&
=\frac{2(N+\alpha p)}{p}+\frac{(p-2)N}{p}-2\lambda .\end{align*}
Therefore, using the H\"older inequality with exponents~$\frac{p}2$ and~$\frac{p}{p-2}$,
we find that, for any~${\mathbf z}\in\R^n$,
\begin{align*}
& \int_{B_{\delta/2}(\mathbf z)}\int_{B_{\delta/2}(\mathbf z)}
\frac{|\u(\x)-\u(\y)|^2}{|\x-\y|^{N+2(\alpha-\lambda) }}\,d\x\, d\y
=\int_{B_{\delta/2}(\mathbf z)}\int_{B_{\delta/2}(\mathbf z)}
\frac{|\u(\x)-\u(\y)|^2}{|\x-\y|^{\frac{2(N+\alpha p)}{p}}}
\frac{d\x\, d\y}{|\x-\y|^{\frac{(p-2)N}{p}-2\lambda}}
\\ &\qquad\le
\left(
\int_{B_{\delta/2}(\mathbf z)}\int_{B_{\delta/2}(\mathbf z)}
\frac{|\u(\x)-\u(\y)|^p}{|\x-\y|^{N+\alpha p}}
\,d\x\, d\y
\right)^{\frac2p}\left(
\int_{B_{\delta/2}(\mathbf z)}\int_{B_{\delta/2}(\mathbf z)}
\frac{d\x\, d\y}{|\x-\y|^{N-\frac{2\lambda p}{p-2}}}\right)^{\frac{p-2}p}
\\ &\qquad\leq C\,
\left(
\int_{\R^N}\int_{B_{\delta}(\mathbf 0)}
\frac{|\u(\x)-\u(\x-\y)|^p}{| \y|^{N+\alpha p}}
\,d\x\, d\y
\right)^{\frac2p},
\end{align*}
for some~$C>0$, possibly depending on~$N$, $\alpha$, $p$
and~$\lambda$. Accordingly, if a family of functions is bounded
in~$\W$, then each component is bounded in~$W^{\alpha-\lambda,2}(B_{\delta/2}(\mathbf z))$,
for any~$\mathbf z\in\R^N$, and so, by~\cite[Theorem 7.1]{DPV},
we obtain compactness in~$L^2(B_{\delta/2}(\mathbf z))$.
Arguing component by component and covering~$\mathcal{K}$ with a finite number
of balls of radius~$\delta/2$, we obtain the desired compactness
in~$L^2({\mathcal{K}};\R^N)$.
\end{proof}

\begin{lemma}
\label{lm:Sobolev1} 
For every $\u$, $\v\in \W$ we have that
\begin{equation}
\label{eq:Sobolev3}
(K\u)\,\v\in L^1(\R^N).
\end{equation}
Moreover, for every  sequence $\{\u_n\}_n\subset \W$ and $\u\in \W$, if
\begin{equation}
\label{eq:Sobolev4}
\u_n\weak \u \quad\text{weakly in $\W$},
\end{equation}
then 
\begin{equation}
\label{eq:Sobolev5}
K\u_n\to K\u \quad\text{ in the sense of distributions on $\R^N$},
\end{equation}
as~$n\to+\infty$.
\end{lemma}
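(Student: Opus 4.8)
The plan is to reduce everything to the \emph{symmetrized} form of the pairing of $K\u$ with a second function, in which the singular kernel of $\f$ acts on a finite difference rather than on a function; this is the device that makes the singularity at $\y=\mathbf 0$ harmless. Throughout write $D\u(\x,\y):=\u(\x)-\u(\x-\y)$, and similarly $D\v$. First I would record two consequences of~\ref{ass:f.5}: since $\Psi(\y,\cdot)$ attains its minimum at the origin, $\nabla_\u\Psi(\y,\mathbf 0)=\mathbf 0$, so that $|\nabla_\u\Psi(\y,\bfeta)|\le g(\y)\,|\bfeta|$ by the mean value theorem and $|D^2_\u\Psi|\le g$; combined with~\eqref{eq:operator2} this gives
\[
\bigl|\f\bigl(\y,D\u(\x,\y)\bigr)\bigr|\le \kappa p\,\frac{|D\u(\x,\y)|^{p-1}}{|\y|^{N+\alpha p}}+g(\y)\,|D\u(\x,\y)|.
\]
Next, starting from~\eqref{eq:operator1}, using Fubini together with the changes of variables $\x\mapsto\x-\y$ and $\y\mapsto-\y$, and invoking the antisymmetry~\ref{ass:f.2}, one obtains
\begin{equation}
\int_{\R^N}(K\u)(\x)\cdot\v(\x)\,d\x=-\frac12\int_{\R^N}\int_{B_\delta(\mathbf 0)}\f\bigl(\y,D\u(\x,\y)\bigr)\cdot\bigl(\v(\x)-\v(\x-\y)\bigr)\,d\y\,d\x.\tag{$\star$}
\end{equation}

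For~\eqref{eq:Sobolev3} it then suffices to show that the double integral in~($\star$) converges absolutely (this also legitimizes the changes of variables leading to~($\star$)), which I would do using the pointwise estimate above. The singular contribution is controlled by H\"older's inequality in $(\x,\y)$ with exponents $\tfrac{p}{p-1}$ and $p$, splitting the kernel as $|\y|^{-(N+\alpha p)}=|\y|^{-\frac{p-1}{p}(N+\alpha p)}\,|\y|^{-\frac1p(N+\alpha p)}$; this yields the bound
\[
\kappa p\left(\int_{\R^N}\int_{B_\delta(\mathbf 0)}\frac{|D\u(\x,\y)|^p}{|\y|^{N+\alpha p}}\,d\y\,d\x\right)^{\frac{p-1}{p}}\left(\int_{\R^N}\int_{B_\delta(\mathbf 0)}\frac{|D\v(\x,\y)|^p}{|\y|^{N+\alpha p}}\,d\y\,d\x\right)^{\frac1p},
\]
which is finite by~\eqref{eq:Sobolev1}. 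The contribution of $g$ is controlled by the Cauchy--Schwarz inequality in $\x$, using $\|\u-\u(\cdot-\y)\|_{L^2(\R^N)}\le 2\|\u\|_{L^2(\R^N)}$ (and likewise for $\v$) together with $g\in L^1(B_\delta(\mathbf 0))$ — which holds since $g\in L^2_{\rm loc}(\R^N)$ — giving the bound $4\,\|\u\|_{L^2(\R^N)}\|\v\|_{L^2(\R^N)}\|g\|_{L^1(B_\delta(\mathbf 0))}$. This proves~\eqref{eq:Sobolev3}.

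For the convergence~\eqref{eq:Sobolev5} it is enough to check that $\int_{\R^N}(K\u_n)\cdot\bm\phi\to\int_{\R^N}(K\u)\cdot\bm\phi$ for every $\bm\phi\in C^\infty_c(\R^N;\R^N)$. Applying~($\star$) with $\v=\bm\phi$ and using $|\bm\phi(\x)-\bm\phi(\x-\y)|\le\|\nabla\bm\phi\|_{L^\infty}|\y|$, the matter reduces to passing to the limit in $\int\!\int\f(\y,D\u_n(\x,\y))\cdot(\bm\phi(\x)-\bm\phi(\x-\y))\,d\y\,d\x$, in which the $\x$-integral is confined to the \emph{fixed compact} set $\mathcal K:=\mathrm{supp}\,\bm\phi+\overline{B_\delta(\mathbf 0)}$ and the integrand is dominated by $\|\nabla\bm\phi\|_{L^\infty}\bigl(\kappa p\,|D\u_n(\x,\y)|^{p-1}|\y|^{1-N-\alpha p}+g(\y)\,|\y|\,|D\u_n(\x,\y)|\bigr)\mathbf 1_{\mathcal K}(\x)$. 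The weak convergence~\eqref{eq:Sobolev4} makes $\{\u_n\}_n$ bounded in $\W$, hence by Lemma~\ref{eq:Sobolev2} relatively compact in $L^2_{\rm loc}(\R^N;\R^N)$; since the weak limit determines the strong one, $\u_n\to\u$ in $L^2_{\rm loc}$, and along a subsequence a.e., so that $D\u_n\to D\u$ a.e.\ and, by continuity of $\f$ (\ref{ass:f.1}), $\f(\y,D\u_n)\to\f(\y,D\u)$ a.e. Finally I would verify that the integrands are uniformly integrable on $\mathcal K\times B_\delta(\mathbf 0)$: for the singular part, splitting $|\y|^{1-N-\alpha p}=|\y|^{-a}|\y|^{-b}$ with $\tfrac{p}{p-1}\,a=N+\alpha p$, H\"older's inequality (exponents $\tfrac{p}{p-1}$, $p$) leaves a residual weight $|\y|^{-bp}$ with $bp=N-p(1-\alpha)<N$ — integrable over $B_\delta(\mathbf 0)$ precisely because $\alpha<1$ — so equi-integrability follows from $\sup_n\|\u_n\|_{\W}<\infty$ and the absolute continuity of the Lebesgue integral; the part with $g$ is equi-integrable by Cauchy--Schwarz and $g\in L^2(B_\delta(\mathbf 0))$. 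Vitali's convergence theorem then gives the convergence of the integrals along the subsequence, and since the limit is independent of the subsequence the whole sequence converges, which is~\eqref{eq:Sobolev5}.

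The crux, and the only genuinely delicate point, is the non-integrable singularity $|\y|^{-(N+\alpha p)}$ of $\f$ at $\y=\mathbf 0$: in general $K\u$ is not an $L^1_{\rm loc}$ function (only an element of $\W^*$), so the pointwise product $(K\u)(\x)\cdot\v(\x)$ cannot be estimated directly and absolute integrability really requires the antisymmetry. The role of~\ref{ass:f.2} is exactly to replace $\v(\x)$ by the difference $\v(\x)-\v(\x-\y)$, which carries an extra factor controlled by $|\y|$ — in an averaged sense through the $\W$-seminorm of $\v$, and pointwise by $\|\nabla\bm\phi\|_{L^\infty}|\y|$ for a smooth test function $\bm\phi$ — and it is this gain of one power of $|\y|$, together with $0<\alpha<1$, that simultaneously makes~($\star$) absolutely convergent and produces the equi-integrability needed for~\eqref{eq:Sobolev5}. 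A secondary point to watch is that, because of the singularity, there is no single dominating function for the family $\{\f(\y,D\u_n)\cdot(\bm\phi-\bm\phi(\cdot-\y))\}_n$, so one must argue via uniform integrability and Vitali's theorem rather than via Lebesgue's dominated convergence theorem.
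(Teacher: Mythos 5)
Your proof is correct and follows essentially the same route as the paper's: you symmetrize the pairing via \ref{ass:f.2}, control the singular part by H\"older with exponents $\tfrac{p}{p-1}$ and $p$, handle the $\Psi$-part through the bound $|\nabla_\u\Psi(\y,\bfeta)|\le g(\y)|\bfeta|$ (which the paper uses implicitly and you justify explicitly), and obtain the distributional convergence from the compact embedding of Lemma~\ref{eq:Sobolev2} (giving a.e.\ convergence along a subsequence) combined with uniform integrability and Vitali's theorem. The only cosmetic difference is that you exploit the pointwise bound $|\bm\phi(\x)-\bm\phi(\x-\y)|\le\|\nabla\bm\phi\|_{L^\infty}|\y|$ and pass to the limit in the nonlinear term as a whole, whereas the paper splits $K\u-K\u_n$ into the three terms $\mathcal A_n$, $\mathcal B_n$, $\mathcal C_n$; this is a minor repackaging of the same Vitali argument.
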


\begin{proof}
We first prove~\eqref{eq:Sobolev3}. 
To this aim, we let $\u,\,\v\in \W$.
Recalling~\eqref{eq:operator3} and~\ref{ass:f.5}, and
using the H\"older inequality, we have that
\begin{align*}
\int_{\R^N}& (K\u)(\x)\v(\x)\,d\x\\
=&-\kappa p \int_{\R^N}\int_{B_\delta(\mathbf 0)}
\frac{(\u(\x)-\u(\x-\y))|\u(\x)-\u(\x-\y)|^{p-2}}{|\y|^{N+\alpha p}}\v(\x)\,d\x\, d\y\\
&\quad -\int_{\R^N}\int_{B_\delta(\mathbf 0)}\nabla_{\u}
\Psi(\y,\u(\x)-\u(\x-\y))\v(\x)\,d\x\, d\y\\
=&-\frac{\kappa p}{2} \int_{\R^N}\int_{B_\delta(\mathbf 0)}
\frac{(\u(\x)-\u(\x-\y))|\u(\x)-\u(\x-\y)|^{p-2}}{|\y|^{N+\alpha p}}\v(\x)\,d\x\, d\y\\
&\quad -\frac{\kappa p}{2} \int_{\R^N}\int_{B_\delta(\mathbf 0)}
\frac{(\u(\z+\y)-\u(\z))|\u(\z+\y)-\u(\z)|^{p-2}}{|\y|^{N+\alpha p}}\v(\z+\y)\,d\z\, d\y\\
&\quad -\int_{\R^N}\int_{B_\delta(\mathbf 0)}\nabla_{\u}
\Psi(\y,\u(\x)-\u(\x-\y))\v(\x)\,d\x\, d\y\\
=&-\frac{\kappa p}{2} \int_{\R^N}\int_{B_\delta(\mathbf 0)}\frac{(\u(\x)-\u(\x-\y))
|\u(\x)-\u(\x-\y)|^{p-2}}{|\y|^{N+\alpha p}}(\v(\x)-\v(\x-\y))\,d\x\, d\y\\
&\quad -\int_{\R^N}\int_{B_\delta(\mathbf 0)}
\nabla_{\u}\Psi(\y,\u(\x)-\u(\x-\y))\v(\x)\,d\x\, d\y\\
\le&\frac{\kappa p}{2} \int_{\R^N}\int_{B_\delta(\mathbf 0)}
\frac{|\u(\x)-\u(\x-\y)|^{p-1}}{|\y|^{(N+\alpha p)/p'}}\frac{|\v(\x)-\v(\x-\y)|}{
|\y|^{(N+\alpha p)/p}}\,d\x\, d\y\\
&\quad +\int_{\R^N}\int_{B_\delta(\mathbf 0)}g(\y)|\u(\x)-\u(\x-\y)||\v(\x)|\,d\x\, d\y\\
\le&\frac{\kappa p}{2} \left(\int_{\R^N}\int_{B_\delta(\mathbf 0)}
\frac{|\u(\x)-\u(\x-\y)|^{p}}{|\y|^{N+\alpha p}}\,d\x\, d\y\right)^{1/p'}
\left(\int_{\R^N}\int_{B_\delta(\mathbf 0)}
\frac{|\v(\x)-\v(\x-\y)|^p}{|\y|^{N+\alpha p}}\,d\x\, d\y\right)^{1/p}\\
&\quad +\int_{\R^N}\int_{B_\delta(\mathbf 0)}g(\y)\big(|\u(\x)|+|\u(\x-\y)|\big)
|\v(\x)|\,d\x\, d\y,
\end{align*}
namely
\begin{equation}\begin{split}\label{jfierhgiub}
\int_{\R^N}& (K\u)(\x)\v(\x)\,d\x\\
\le&\frac{\kappa p}{2} \left(\int_{\R^N}\int_{B_\delta(\mathbf 0)}
\frac{|\u(\x)-\u(\x-\y)|^{p}}{|\y|^{N+\alpha p}}\,d\x\, d\y\right)^{1/p'}\times\\
&\qquad\qquad\times\left(\int_{\R^N}\int_{B_\delta(\mathbf 0)}
\frac{|\v(\x)-\v(\x-\y)|^p}{|\y|^{N+\alpha p}}\,d\x\, d\y\right)^{1/p}\\
&\quad +\int_{\R^N}\int_{B_\delta(\mathbf 0)}g(\y)\big(|\u(\x)|+|\u(\x-\y)|\big)
|\v(\x)|\,d\x\, d\y,
\end{split}\end{equation}

Now, we observe that, for any~$\lambda>0,\,\x,\,\y\in\R^N$, 
\begin{align*} 
g(\y) |\u(\x-\y)|\,|\v(\x)|=&
2g(\y)\frac{\lambda}{\sqrt{2}} |\u(\x-\y)|\, \frac{1}{\lambda\sqrt{2}}\,|\v(\x)|\\
\le& \frac{\lambda^2}{{2}}g(\y)\, |\u(\x-\y)|^2+ \frac{1}{2\lambda^2}g(\y)\,|\v(\x)|^2,
\end{align*}
where we used the Young inequality.
Therefore, integrating the last inequality in~$\R^N\times B_\delta(\mathbf 0)$, we get
\begin{align*}
\int_{\R^N}&\int_{B_\delta(\mathbf 0)}g(\y)\,|\u(\x-\y)|\,|\v(\x)|\,d\x\, d\y
\\\le& \frac{\lambda^2}2 \int_{\R^N}\int_{B_\delta(\mathbf 0)}g(\y)\,|\u(\x-\y)|^2\,d\x\, d\y
+ \frac{1}{2\lambda^2}\int_{\R^N}\int_{B_\delta(\mathbf 0)} g(\y)\,|\v(\x)|^2\,d\x\, d\y
\\=& \frac{\lambda^2}2 \left(\int_{B_\delta(\mathbf 0)}g(\y)\,d\y\right)\,
\left(\int_{\R^N}|\u(\x)|^2\,d\x \right)
+ \frac{1}{2\lambda^2}\left(\int_{B_\delta(\mathbf 0)} g(\y)\,d\y\right)\,
\left(\int_{\R^N}|\v(\x)|^2\,d\x\right).
\end{align*}
Hence, the choice
$$ \lambda:=\left(\frac{\displaystyle \int_{\R^N}|\u(\x)|^2\,d\x}{\displaystyle \int_{\R^N}|\v(\x)|^2\,d\x}
\right)^{\frac14}$$
allows to state
\begin{align*}
\int_{\R^N}\int_{B_\delta(\mathbf 0)}&g(\y)\,|\u(\x-\y)|\,|\v(\x)|\,d\x\, d\y\\
\le& \left(\int_{B_\delta(\mathbf 0)}g(\y)\,d\x\right)\left(\int_{\R^N}|\u(\x)|^2 \,d\x\right)^{1/2}\left(\int_{\R^N}|\v(\x)|^2\,d\x \right)^{1/2}.
\end{align*}
Plugging this information into~\eqref{jfierhgiub}, we conclude that
\begin{align*}
\int_{\R^N}& (K\u)(\x)\v(\x)\,d\x\\
\le &\frac{\kappa p}{2} \left(\int_{\R^N}\int_{B_\delta(\mathbf 0)}
\frac{|\u(\x)-\u(\x-\y)|^{p}}{|\y|^{N+\alpha p}}\,d\x\, d\y\right)^{1/p'}
\left(\int_{\R^N}\int_{B_\delta(\mathbf 0)}
\frac{|\v(\x)-\v(\x-\y)|^p}{|\y|^{N+\alpha p}}\,d\x\, d\y\right)^{1/p}\\
& + 2\left(\int_{B_\delta(\mathbf 0)}g(\y)\,d\x\right)
\left(\int_{\R^N}|\u(\x)|^2 \,d\x\right)^{1/2}\left(\int_{\R^N}|\v(\x)|^2\,d\x \right)^{1/2}.
\end{align*}
Since~$\u$, $\v\in\W$, this implies~\eqref{eq:Sobolev3}.
\smallskip

Now suppose that~\eqref{eq:Sobolev4} holds true and we prove~\eqref{eq:Sobolev5}.
For this, let~$\v\in C^\infty(\R^{N};\R^N)$
be such that every component has compact support.
For the sake of simplicity, we use the notation
\begin{equation}\begin{split}\label{86yjfdbfdjs}
 \U(\x,\y):=&\u(\x)-\u(\x-\y),\\
 \U_n(\x,\y):=&\u_n(\x)-\u_n(\x-\y)\\
 \V(\x,\y):=&\v(\x)-\v(\x-\y).
\end{split}\end{equation}
Arguing as before, we have that
\begin{equation}\begin{split}\label{gjhg5844000}
\int_{\R^N}& \big((K\u)(\x)-(K\u_n)(\x)\big)\v(\x)\,d\x\\
=&-\frac{\kappa p}{2} \underbrace{\int_{\R^N}
\int_{B_\delta(\mathbf 0)}\frac{(\U(\x,\y)-\U_n(\x,\y))|\U(\x,\y)|^{p-2}}{
|\y|^{N+\alpha p}}\V(\x,\y)\,d\x \,d\y}_{\mathcal{A}_n}\\
&-\frac{\kappa p}{2} \underbrace{\int_{\R^N}\int_{B_\delta(\mathbf 0)}
\U(\x,\y)\frac{|\U(\x,\y)|^{p-2}-|\U_n(\x,\y)|^{p-2}}{|\y|^{N+\alpha p}}
\V(\x,\y)\,d\x\, d\y}_{\mathcal{B}_n}\\
&-\underbrace{\int_{\R^N}\int_{B_\delta(\mathbf 0)}\Big(\nabla_{\u}
\Psi(\y,\U(\x,\y))-\nabla_{\u}\Psi(\y,\U_n(\x,\y))\Big)\v(\x)\,d\x\, d\y
}_{\mathcal{C}_n}.
\end{split}\end{equation}
Now we claim that
\begin{equation}\label{VIT}
\mathcal{A}_n\to0.\end{equation}
To check this, we use that each component of~$\v$ is compactly supported, and
we suppose that the support is contained in some ball~$B_R(\mathbf 0)$.
Then,
\begin{align*}  &\frac12\int_{B_{R+\delta}(\mathbf 0)}
\int_{B_\delta(\mathbf 0)} |\U_n(\x,\y)-\U(\x,\y)|^2\,d\x\,d\y
\\\qquad&=\frac12
\int_{B_{R+\delta}(\mathbf 0)}\int_{B_\delta(\mathbf 0)} \big|\big(\u_n(\x)-\u(\x)\big)
-\big(\u_n(\x-\y)-\u(\x-\y)\big)\big|^2\,d\x\,d\y\\
\qquad&\le
\int_{B_{R+\delta}(\mathbf 0)}\int_{B_\delta(\mathbf 0)} |\u_n(\x)-\u(\x)|^2\,d\x\,d\y+
\int_{B_{R+\delta}(\mathbf 0)}\int_{B_\delta(\mathbf 0)}
|\u_n(\x-\y)-\u(\x-\y)|^2\,d\x\,d\y,
\end{align*}
which converges to~$0$ as~$n\to+\infty$, thanks to Lemma~\ref{eq:Sobolev2}.
In particular, up to a subsequence, we can assume that~$\U_n\to\U$
a.e. in~$B_{R+\delta}(\mathbf 0)\times B_\delta(\mathbf 0)$ as~$n\to+\infty$.
Consequently,
$$ \F_n(x,y):=
\frac{\U_n(\x,\y)|\U(\x,\y)|^{p-2}}{
|\y|^{N+\alpha p}}\V(\x,\y)\to
\F(x,y):=
\frac{\U(\x,\y)|\U(\x,\y)|^{p-2}}{
|\y|^{N+\alpha p}}\V(\x,\y)$$
a.e. in~$B_{R+\delta}(\mathbf 0)\times B_\delta(\mathbf 0)$ as~$n\to+\infty$.
We also observe that
\begin{align*} \int_{B_{R+\delta}(\mathbf 0)}&\int_{B_\delta(\mathbf 0)} |\F(x,y)|\,d\x\,d\y\le
\int_{B_{R+\delta}(\mathbf 0)}\int_{B_\delta(\mathbf 0)}
\frac{|\U(\x,\y)|^{p-1}}{
|\y|^{N+\alpha p}}|\V(\x,\y)|\,d\x\,d\y\\ \le&
\left(
\int_{B_{R+\delta}(\mathbf 0)}\int_{B_\delta(\mathbf 0)}
\frac{|\U(\x,\y)|^{p}}{
|\y|^{N+\alpha p}}\,d\x\,d\y\right)^{\frac{p-1}p}
\left(
\int_{B_{R+\delta}(\mathbf 0)}\int_{B_\delta(\mathbf 0)}
\frac{|\V(\x,\y)|^{p}}{
|\y|^{N+\alpha p}}\,d\x\,d\y\right)^{\frac1p}
<+\infty,\end{align*}
where we have used the H\"older Inequality with exponents~$\frac{p}{p-1}$ and~$p$.
Consequently,~$\F\in L^1\big( B_{R+\delta}(\mathbf 0)\times B_\delta(\mathbf 0)\big)$,
and therefore~$\F$ is finite a.e. in~$B_{R+\delta}(\mathbf 0)\times B_\delta(\mathbf 0)$.

We claim that
\begin{equation}\label{UNI F}
{\mbox{$\F_n$ is uniformly integrable.}}\end{equation} To prove this, fix~$\eps>0$
and let~$\eta_\eps>0$ be such that for any measurable~$E\subset
B_{R+\delta}(\mathbf 0)\times B_\delta(\mathbf 0)$ with measure less than~$\eta_\eps$
we have that
$$ \iint_{E}
\frac{|\V(\x,\y)|^{p}}{
|\y|^{N+\alpha p}}\,d\x\,d\y\le\eps.$$
Then, exploiting the H\"older Inequality with exponents~$p$, $\frac{p}{p-2}$ and~$p$,
\begin{align*}
\iint_{E}
\F_n(\x,\y)\,d\x\,d\y \le&\iint_{E}
\frac{|\U_n(\x,\y)|
}{
|\y|^{\frac{N+\alpha p}p}}\cdot
\frac{|\U(\x,\y)|^{p-2}}{
|\y|^{\frac{(N+\alpha p)(p-2)}{p}}}
\cdot
\frac{|\V(\x,\y)|}{
|\y|^{\frac{N+\alpha p}{p}}}
\,d\x\,d\y\\
\le&\left[\iint_{E}
\frac{|\U_n(\x,\y)|^p
}{
|\y|^{{N+\alpha p}}}
\,d\x\,d\y\right]^{\frac{1}{p}}
\left[
\iint_{E}
\frac{|\U(\x,\y)|^{p}}{
|\y|^{N+\alpha p}}
\,d\x\,d\y\right]^{\frac{p-2}{p}}
\left[
\iint_{E}
\frac{|\V(\x,\y)|^p}{
|\y|^{{N+\alpha p}}}
\,d\x\,d\y\right]^{\frac{1}{p}}\\
\le& C\, \eps^{\frac1p},
\end{align*}
for some~$C>0$. This establishes~\eqref{UNI F}. {F}rom it, using Vitali Convergence Theorem,
we obtain~\eqref{VIT}.

Similarly, one can prove that
\begin{equation*}
\mathcal{B}_n\to0\quad {\mbox{ and }}\quad \mathcal{C}_n\to0
\end{equation*}
as~$n\to+\infty$.
Using these pieces of information together with~\eqref{gjhg5844000}
we obtain the desired result \eqref{eq:Sobolev5}.
\end{proof}

\subsection{Definition of weak solutions and main results}\label{sub:def}

In order to look for solutions of~\eqref{eq:CP}, we need to introduce
a suitable functional setting. For this, we denote by~$\mathcal{X}$ the functional
space defined as follows:
\begin{equation}
\label{eq:Sobolev6}
\mathcal{X}:=\left\{\u:\R^N\times[0,\infty)\to\R^N;\>\> \begin{aligned} &\u \in L^\infty(0,T;\W),\,T>0\\
&\pt \u\in L^\infty(0,\infty;L^2(\R^N;\R^N))\end{aligned}\right\}.
\end{equation}
With this, we can give the following definition of weak solutions:

\begin{definition}
\label{def:sol}
Let $\u:\R^N\times[0,\infty)\to\R^N$.
We say that $\u$ is a weak solution of the Cauchy problem \eqref{eq:CP} if
\begin{Definitions}
\item \label{def:1} $\u\in \mathcal{X}$;
\item \label{def:3} for every test function $\v\in C^\infty(\R^{N+1};\R^N)$ such
that every component has compact support, it holds that
\begin{equation}
\label{eq:def}
\begin{split}
\int_0^\infty\int_{\R^N}&\Big(\u(\x,t)\cdot\ptt \v(\x,t)-(K\u(\cdot,t))(\x)\cdot 
\v(\x,t)\Big)\,dt \, d\x\\
&-\int_{\R^N}\v_0(\x)\cdot\v(\x,0)\,d\x+\int_{\R^N}\u_0(\x)\cdot\pt\v(\x,0)\,d\x=0.
\end{split}
\end{equation}
\end{Definitions}
\end{definition}

We can therefore state the main
results of this paper:

\begin{theorem}[{\bf Existence}]
\label{th:existence}
Let~\ref{ass:f.1}, \ref{ass:f.2}, and \ref{ass:f.5} be satisfied.
Then, for every initial datum~$(\u_0,\,\v_0)$ such that
\begin{equation}
\label{eq:assinit}\begin{split}
&\u_0\in L^2(\R^N;\R^N),\,\v_0\in L^2(\R^N;\R^N),\\
{\mbox{and }}\quad & \int_{\R^N}\int_{B_\delta(\mathbf 0)}\Phi
(\y,\u_0(\x)-\u_0(\x-\y))\,d\x\, d\y<\infty,
\end{split}\end{equation}
the Cauchy problem \eqref{eq:CP} admits at least one weak solution in the sense of Definition \ref{def:sol} such that
\begin{equation}
\label{eq:energydecay}
E[\u](t)\le E[\u](0),\qquad \text{for a.e.}\>t\ge0.
\end{equation}
\end{theorem}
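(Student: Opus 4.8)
The plan is to construct weak solutions via a Galerkin-type approximation, combined with a priori energy estimates coming from the hyperelastic structure of the problem, and then pass to the limit using the compactness built into Lemmas \ref{eq:Sobolev2} and \ref{lm:Sobolev1}.

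\smallskip

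\textbf{Step 1: Approximation.} First I would regularize the problem. Since the initial data only have finite energy (not necessarily in $\W$), I would first smooth and truncate $\u_0$ to obtain $\u_0^\eps\in\W\cap C^\infty$ with $\u_0^\eps\to\u_0$ in $L^2$ and $\int\int_{B_\delta}\Phi(\y,\u_0^\eps(\x)-\u_0^\eps(\x-\y))\,d\x\,d\y$ converging to the corresponding quantity for $\u_0$; this uses the continuity of $\Phi$ together with the growth bound in \ref{ass:f.5}. Next, to handle the unboundedness of $\R^N$, I would solve the problem on a large ball $B_m$ (with, say, periodic or Dirichlet-type conditions adapted to the nonlocal operator) and on a finite-dimensional subspace spanned by the first $k$ eigenfunctions of a suitable operator, reducing \eqref{eq:CP} to a system of ODEs $\ddot{c}_j = F_j(c)$. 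Since $K$ maps $\W$ continuously into $L^2$ by the estimate \eqref{jfierhgiub} and $\f\in C^1$, the right-hand side is locally Lipschitz, so Cauchy--Lipschitz yields a local solution $\u_{k,m}$.

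\smallskip

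\textbf{Step 2: Energy estimate.} The key a priori bound is the energy identity. Multiplying the equation by $\pt\u$ and integrating, using that $\f=\nabla_\u\Phi$ (Assumption \ref{ass:f.5}) and the antisymmetrization trick already used in the proof of Lemma \ref{lm:Sobolev1} (writing $\int (K\u)\pt\u = -\tfrac12\int\int_{B_\delta}\f(\y,\U)\cdot(\pt\u(\x)-\pt\u(\x-\y))$ and recognizing $\partial_t\Phi(\y,\U)$), one obtains $\frac{d}{dt}E[\u_{k,m}](t)=0$, hence $E[\u_{k,m}](t)=E[\u_{k,m}](0)\le C$ uniformly. Since $\Phi\ge \kappa|\u|^p/|\y|^{N+\alpha p}$ (because $\Psi\ge0$), this controls $\|\pt\u_{k,m}\|_{L^2}$ and the $\W$-seminorm of $\u_{k,m}$ uniformly in $k,m,t$; combined with control of $\|\u_{k,m}\|_{L^2}$ (via $\pt\u$ and Gronwall on bounded time intervals) this gives a uniform bound in $\mathcal{X}$, and in particular global existence of the approximants.

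\smallskip

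\textbf{Step 3: Passage to the limit.} With uniform bounds in $L^\infty(0,T;\W)$ for $\u_{k,m}$ and $L^\infty(0,\infty;L^2)$ for $\pt\u_{k,m}$, I extract (for fixed $T$, diagonalizing over $T\to\infty$) a subsequence with $\u_{k,m}\weak\u$ weakly-$*$ in $L^\infty(0,T;\W)$, $\pt\u_{k,m}\weak\pt\u$ weakly-$*$ in $L^\infty(0,T;L^2)$. Using the equation, $\ptt\u_{k,m}$ is bounded in $L^\infty(0,T;L^2_{loc})$ or a negative space, so by Aubin--Lions together with the compact embedding of Lemma \ref{eq:Sobolev2} one gets $\u_{k,m}\to\u$ strongly in $C([0,T];L^2_{loc})$. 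This strong local convergence, together with the distributional continuity of $K$ along weakly-$\W$-convergent sequences established in Lemma \ref{lm:Sobolev1} (adapted to the time-dependent setting, testing against $\v(\x,t)$ and integrating in $t$), allows me to pass to the limit in the weak formulation \eqref{eq:def}, recovering that $\u$ is a weak solution. The energy inequality \eqref{eq:energydecay} follows from the energy equality for the approximants by weak lower semicontinuity: $\|\pt\u(\cdot,t)\|_{L^2}^2$ and $\int\int_{B_\delta}\Phi(\y,\U)$ are both lower semicontinuous under the available convergences (the first by weak $L^2$ convergence, the second by Fatou, using $\Phi\ge0$ and a.e.\ convergence of $\U_{k,m}\to\U$), while the initial energies converge by Step 1.

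\smallskip

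\textbf{Main obstacle.} The delicate point is the passage to the limit in the nonlinear term $K\u_{k,m}$ inside the space-time integral of \eqref{eq:def}: Lemma \ref{lm:Sobolev1} is stated for a fixed weakly convergent sequence in $\W$, but here I have a sequence depending on time, so I must upgrade it to $\int_0^T\int_{\R^N}(K\u_{k,m})(\x,t)\cdot\v(\x,t)\,d\x\,dt\to\int_0^T\int_{\R^N}(K\u)(\x,t)\cdot\v(\x,t)\,d\x\,dt$. This requires a dominated-convergence argument in $t$: the integrand is bounded (uniformly in $k,m$, for each $t$) by $C\,E[\u_{k,m}](t)^{1/p'}\|\v(\cdot,t)\|_{\W}+\dots \le C(T)$ using the energy bound and \eqref{jfierhgiub}, and converges for a.e.\ $t$ by the (time-slice) argument of Lemma \ref{lm:Sobolev1} applied to the strong $L^2_{loc}$ and weak $\W$ convergence at that time. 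A secondary technical nuisance is reconciling the finite-domain approximation with the whole space $\R^N$; one can circumvent it by instead approximating directly on $\R^N$ using a partition-of-unity/localization scheme, or by choosing $B_m$ with the bonds of length $<\delta$ reaching outside handled by zero extension, exploiting that $\delta$ is fixed and $\u_{k,m}$ has controlled $L^2$ mass.
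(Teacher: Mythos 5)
Your proposal is sound in outline and shares the core of the paper's argument --- the energy identity obtained by testing with $\pt\u$ and exploiting \ref{ass:f.2} and the hyperelastic structure $\f=\nabla_\u\Phi$ of \ref{ass:f.5}, the compactness of $\W$ in $L^2_{\rm loc}$ from Lemma~\ref{eq:Sobolev2}, the stability of $K$ under weak $\W$--convergence from Lemma~\ref{lm:Sobolev1}, and lower semicontinuity (Fatou for the $\kappa|\u|^p/|\y|^{N+\alpha p}$ part, weak $L^2$ lower semicontinuity for the kinetic part) to get \eqref{eq:energydecay} --- but it differs in the approximation step. The paper does not use a Galerkin scheme: it regularizes the equation itself by adding a fourth--order viscosity term $-\eps\Delta^2\ue$ (problem \eqref{eq:CPeps}) with smoothed data on all of $\R^N$, obtains global smooth solutions by semigroup arguments, and then lets a single parameter $\eps\to0$; the extra term only contributes $\eps\|\Delta\u_{0,\eps}\|_{L^2}^2$ to the energy (assumed to vanish) and, in the weak formulation, a term $\eps_k\int\!\!\int \u_{\eps_k}\cdot\Delta^2\v$ which dies trivially. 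Your route instead buys elementary ODE well--posedness (Cauchy--Lipschitz) at the price of juggling three parameters (data smoothing, domain size $m$, dimension $k$) and the zero--extension of bonds across $\partial B_m$; the paper's route avoids truncation and projection entirely but leans on citing semigroup theory for the fourth--order nonlocal problem. Two points in your write--up need repair, though neither is fatal. First, \eqref{jfierhgiub} does \emph{not} say that $K$ maps $\W$ continuously into $L^2$; it is a duality estimate, bounding $\int (K\u)\cdot\v$ for $\u,\v\in\W$ (indeed $K\u$ need not lie in $L^2$, and for $\alpha p\ge 1$ the defining integral is not even absolutely convergent pointwise on Lipschitz functions). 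The Galerkin system should therefore be written through the weak pairing $\ddot c_j=\langle K\u_k,e_j\rangle$; local Lipschitz dependence on $c$ then follows from the $C^1$ character of $\zz\mapsto|\zz|^{p-2}\zz$, the bound $|D^2_\u\Psi|\le g$ with $g\in L^1(B_\delta(\mathbf 0))$, and the Lipschitz regularity of the basis functions, which supplies the factors of $|\y|$ that tame the kernel singularity. Second, your energy argument is legitimate precisely because $\pt\u_k$ lies in the Galerkin subspace, so testing the projected equation with it reproduces the full pairing $\langle K\u_k,\pt\u_k\rangle$ and hence $\frac{d}{dt}E=0$; this is worth saying explicitly, since it is the analogue of the paper's computation in Lemma~\ref{lm:enest}. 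Your identification of the genuinely delicate point --- upgrading Lemma~\ref{lm:Sobolev1} to the time--dependent setting via a uniform-in-$t$ bound plus dominated convergence --- is correct, and in fact you are more explicit there than the paper, which passes over this step rather quickly.
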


\begin{theorem}[{\bf Uniqueness and Stability}]
\label{th:stability}
Let~\ref{ass:f.1}, \ref{ass:f.2}, and \ref{ass:f.5} be satisfied, and let
\begin{equation}\label{p2}
p=2.\end{equation}
If $\u$ and $\wu$ are weak solutions of~\eqref{eq:CP} obtained in
correspondence of the initial data~$(\u_0,\,\v_0)$ and~$(\wu_0,\,\wv_0)$,
respectively, satisfying \eqref{eq:assinit}, then
the following stability estimate holds true:
\begin{equation}
\label{eq:stability}
\begin{split}
&\norm{\pt\u(\cdot,t)-\pt\wu(\cdot,t)}_{L^2(\R^N)}^2\\
&\quad\quad\quad+\kappa\int_{\R^N}\int_{B_\delta(\mathbf 0)}
\frac{|\u(\x,t)-\u(\x-\y,t)-\wu(\x,t)+\wu(\x-\y,t)|^2}{|\y|^{N+2\alpha}}\,d\x\, d\y\\
&\quad\quad \le e^{\left(\lambda+\frac1\kappa\right)t}
\norm{\v_0-\wv_0}_{L^2(\R^N)}^2\\
&\quad\quad\quad+\kappa e^{\left(\lambda+\frac1\kappa\right)t}
\int_{\R^N}\int_{B_\delta(\mathbf 0)}\frac{|\u_0(\x)-\u_0(\x-\y)-\wu_0(\x)
+\wu_0(\x-\y)|^2}{|\y|^{N+2\alpha}}\,d\x\, d\y
\end{split}
\end{equation}
for every $t>0$, where
\begin{equation*}
\lambda:=\int_{B_\delta(\mathbf 0)} g^2(\y)|\y|^{N+2\alpha}\,d\y,
\end{equation*}
and $\kappa$ is the one appearing in \ref{ass:f.5}.
\end{theorem}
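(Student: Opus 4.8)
The plan is to derive a differential inequality for the quantity
\[
D(t):=\norm{\pt\u(\cdot,t)-\pt\wu(\cdot,t)}_{L^2(\R^N)}^2
+\kappa\int_{\R^N}\int_{B_\delta(\mathbf 0)}
\frac{|\u(\x,t)-\u(\x-\y,t)-\wu(\x,t)+\wu(\x-\y,t)|^2}{|\y|^{N+2\alpha}}\,d\x\, d\y
\]
and then integrate it via Gr\"onwall's lemma. Set~$\w:=\u-\wu$, so that (formally)~$\ptt\w=K\u-K\wu$ with zero right-hand side forcing, and~$\w$, $\pt\w$ inherit the regularity coming from~$\mathcal{X}$. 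The natural test function in the weak formulation~\eqref{eq:def} is~$\pt\w$ itself; of course~$\pt\w$ is not an admissible smooth compactly supported test function, so the first task is a regularization/justification step. I would argue that, since the solutions are the ones produced in Theorem~\ref{th:existence} (built by Galerkin/compactness), one may use~$\pt\w$ as a test function after a standard density and time-mollification argument, obtaining the energy identity
\[
\frac12\frac{d}{dt}\norm{\pt\w(\cdot,t)}_{L^2}^2
=\int_{\R^N}\big((K\u)(\x,t)-(K\wu)(\x,t)\big)\cdot\pt\w(\x,t)\,d\x .
\]

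Next I would expand~$K\u-K\wu$ using the symmetrized representation~\eqref{eq:operator3}, exactly as in the proof of Lemma~\ref{lm:Sobolev1}: writing~$\U(\x,\y)=\u(\x)-\u(\x-\y)$ etc., the quadratic (singular) part contributes, after the change of variables~$\z=\x-\y$ and symmetrization,
\[
-\frac{\kappa p}{2}\int_{\R^N}\int_{B_\delta(\mathbf 0)}
\frac{(\U-\widetilde\U)}{|\y|^{N+\alpha p}}\cdot(\pt\U-\pt\widetilde\U)\,d\x\, d\y ,
\]
which with~$p=2$ is precisely~$-\tfrac{\kappa}{2}\frac{d}{dt}\int\!\!\int\frac{|\U-\widetilde\U|^2}{|\y|^{N+2\alpha}}$; this is the term that, moved to the left, builds up the second summand of~$D(t)$. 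The~$\Psi$-part contributes a remainder
\[
-\int_{\R^N}\int_{B_\delta(\mathbf 0)}\big(\nabla_\u\Psi(\y,\U)-\nabla_\u\Psi(\y,\widetilde\U)\big)\cdot\pt\w(\x)\,d\x\, d\y ,
\]
which I bound using the Lipschitz bound~$|\nabla_\u\Psi(\y,\U)-\nabla_\u\Psi(\y,\widetilde\U)|\le g(\y)|\U-\widetilde\U|$ from~\ref{ass:f.5}. After a Cauchy--Schwarz in~$\x$ and Young's inequality, the~$g$-term is controlled by
\[
\tfrac12\lambda\,\kappa\int_{\R^N}\int_{B_\delta(\mathbf 0)}\frac{|\U-\widetilde\U|^2}{|\y|^{N+2\alpha}}\,d\x\, d\y
+\tfrac1{2\kappa}\norm{\pt\w}_{L^2}^2 ,
\]
where the weight~$g^2(\y)|\y|^{N+2\alpha}$ is exactly what is integrated in the definition of~$\lambda$ (one writes~$g(\y)|\U-\widetilde\U|=\big(g(\y)|\y|^{(N+2\alpha)/2}\big)\cdot\big(|\U-\widetilde\U|/|\y|^{(N+2\alpha)/2}\big)$ before applying Young). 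Collecting terms yields
\[
\frac{d}{dt}D(t)\le\Big(\lambda+\tfrac1\kappa\Big)D(t),
\]
and Gr\"onwall gives~\eqref{eq:stability}; uniqueness follows by taking~$\u_0=\wu_0$, $\v_0=\wv_0$.

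The main obstacle I anticipate is rigor at the regularization step: justifying that~$\pt\w$ (equivalently, a time-difference quotient of~$\w$) is an admissible test vector so that the formal energy identity actually holds, given that weak solutions only have~$\ptt\u$ in a dual space and~$K\u$ is merely a distribution by Lemma~\ref{lm:Sobolev1}. I would handle this by mollifying in time, testing with the mollified~$\pt\w$, using the commutator/Friedrichs-type estimates together with the uniform-integrability machinery already established in Lemma~\ref{lm:Sobolev1} (which gives the needed continuity of~$K$ along the approximations), and passing to the limit. A secondary technical point is that the change of variables~$\z=\x-\y$ and the symmetrization are licit because all the integrals involved are absolutely convergent for~$\u,\wu\in\mathcal{X}$ — this is precisely the content of~\eqref{eq:Sobolev3} in Lemma~\ref{lm:Sobolev1}, so I would invoke it rather than re-derive the bounds. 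Everything else is a bookkeeping exercise in Cauchy--Schwarz, Young, and Gr\"onwall.
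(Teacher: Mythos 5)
Your proposal follows essentially the same route as the paper: set $\w=\u-\wu$, exploit that for $p=2$ the singular part of $\f$ is linear so its contribution symmetrizes into $\tfrac{\kappa}{2}\tfrac{d}{dt}\int\!\!\int|\w(\x,t)-\w(\x-\y,t)|^2|\y|^{-N-2\alpha}$, control the $\Psi$-difference by $g(\y)|\w(\x,t)-\w(\x-\y,t)|$ (the paper writes this via $\int_0^1 D^2_{\u}\Psi\,d\theta$, which is the same bound), apply the weighted Young inequality producing exactly $\lambda=\int_{B_\delta(\mathbf 0)}g^2(\y)|\y|^{N+2\alpha}\,d\y$, and conclude by Gronwall with the constant $\lambda+\tfrac1\kappa$. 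The only difference is that you explicitly flag the justification of testing with $\pt\w$, a step the paper performs formally; this is a fair observation but does not change the argument.
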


In the forthcoming Sections~\ref{sec:exist} and~\ref{sec:stab}
we will give the proofs of Theorems~\ref{th:existence}
and~\ref{th:stability}, respectively.


\section{Proof of Theorem \ref{th:existence}}
\label{sec:exist}

In this section we prove
Theorem \ref{th:existence}. 
The arguments rely on
the compactness of the solutions of suitable approximations 
of~\eqref{eq:CP}.

More precisely, let~$\eps>0$
and~$\ue$ be the unique smooth solution of the fourth order problem
\begin{equation}
\label{eq:CPeps}
\begin{cases}
\ptt \ue(\x,t)=(K\ue(\cdot,t))(\x)-\eps\Delta^2\ue,&\quad \x\in \R^N,\>t>0,\\
\ue(\x,0)=\u_{0,\eps}(\x),&\quad \x\in\R^N,\\
\pt \ue(\x,0)=\v_{0,\eps}(\x),&\quad \x\in\R^N,
\end{cases}
\end{equation}
where~$\u_{0,\eps}$ and~$\v_{0,\eps}$ are smooth approximations
of~$\u_{0}$ and~$\v_{0}$, respectively, such that
\begin{equation}
\label{eq:assinitial_eps}
\begin{split}
&\u_{0,\eps},\,\v_{0,\eps} \in C^\infty(\R^N;\R^N),\quad {\mbox{ for any }} \eps>0,\\
&\u_{0,\eps}\to\u_0,\>\v_{0,\eps}\to\v_0\quad\text{a.e. in $\R^N$ and in $L^2(\R^N;\R^N)$ as $\eps\to0$},\\
&\lim\limits_{\eps\to0}\int_{\R^N}
\int_{B_\delta(\mathbf 0)}\Phi\big(\y,\u_{0,\eps}(\x)-\u_{0,\eps}(\x-\y)\big)\,d\x\, d\y\\
&\qquad\qquad\qquad\qquad=
\int_{\R^N}\int_{B_\delta(\mathbf 0)}\Phi\big(\y,\u_{0}(\x)-\u_{0}(\x-\y)\big)\,d\x\, d\y,\\
&\lim\limits_{\eps\to0}\sqrt{\eps}\norm{\Delta \u_{0,\eps}}_{L^2(\R^N)}=0.
\end{split}
\end{equation}
The well-posedness of \eqref{eq:CPeps}, hence the existence of smooth solutions for that problem, follows
by classical semigroup based arguments, see e.g.~\cite{EN}.
\smallskip

Recalling the notation introduced in Subsections~\ref{subsec:math.2}
and~\ref{sub:def}, 
the main compactness result of this section is the following:

\begin{lemma}
\label{lm:exist}
Let~\ref{ass:f.1}, \ref{ass:f.2} and \ref{ass:f.5} be satisfied.
Then, there exist a sequence $\{\eps_k\}_k\subset(0,\infty)$
and a function~$\u\in\mathcal{X}$ such that, as $k\to\infty$,
\begin{align}
\label{eq:exist1}
&\u_{\eps_k}\to\u \quad\text{a.e. in $\R^N\times[0,\infty)$ and 
in $L^2_{\rm{loc}}(\R^N\times(0,\infty);\R^N)$,}\\
\label{eq:exist1.1}
&\pt \u_{\eps_k}\weak\pt\u \quad\text{in $L^r(0,T;L^2(\R^N;\R^N))$, for any 
$1\le r<\infty$, and~$T>0$,}\\
\label{eq:exist1.2}
&\u_{\eps_k}\weak\u \quad\text{a.e. in $\R^N\times[0,\infty)$
and in $L^r(0,T;\W)$, for any $1\le r<\infty$ and $T>0$,}\\
\label{eq:exist2}
&\text{$\u$ is a weak solution of \eqref{eq:CPeps} in the sense of Definition \ref{def:sol}}.
\end{align}
\end{lemma}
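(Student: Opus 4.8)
The plan is to establish the a priori bounds first, then extract a convergent subsequence, and finally pass to the limit in the weak formulation. The starting point is an energy estimate for the regularized problem \eqref{eq:CPeps}: multiplying the equation by $\pt\ue$ and integrating over $\R^N$, one obtains (formally, but this is justified by the smoothness of $\ue$)
$$\frac{d}{dt}\left(\frac{\norm{\pt\ue(\cdot,t)}_{L^2(\R^N)}^2}{2}+\frac12\int_{\R^N}\int_{B_\delta(\mathbf 0)}\Phi(\y,\ue(\x,t)-\ue(\x-\y,t))\,d\x\,d\y+\frac{\eps}{2}\norm{\Delta\ue(\cdot,t)}_{L^2(\R^N)}^2\right)=0,$$
using \ref{ass:f.5} and the change of variables $\y\mapsto-\y$, $\x\mapsto\x-\y$ exactly as in the computation preceding \eqref{jfierhgiub}. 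This is the content of the forthcoming Lemma~\ref{lm:enest} referred to near \eqref{eq:enert}. Combined with the convergence of the initial energies in \eqref{eq:assinitial_eps} and the coercivity built into $\Phi$ via the term $\kappa|\u|^p/|\y|^{N+\alpha p}$ (recall $\Psi\ge0$), this yields, uniformly in $\eps$ and on any finite time interval $[0,T]$, a bound on $\norm{\pt\ue(\cdot,t)}_{L^2(\R^N)}$, on the Gagliardo-type seminorm $\int_{\R^N}\int_{B_\delta(\mathbf 0)}|\ue(\x,t)-\ue(\x-\y,t)|^p/|\y|^{N+\alpha p}\,d\x\,d\y$, and on $\sqrt{\eps}\norm{\Delta\ue(\cdot,t)}_{L^2(\R^N)}$. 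To control the full $\W$-norm (which also contains $\norm{\ue(\cdot,t)}_{L^2(\R^N)}$), I would integrate $\partial_t\ue$ in time: $\norm{\ue(\cdot,t)}_{L^2(\R^N)}\le\norm{\u_{0,\eps}}_{L^2(\R^N)}+\int_0^t\norm{\pt\ue(\cdot,s)}_{L^2(\R^N)}\,ds$, which is bounded on $[0,T]$ by the previous estimates.

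Next, these uniform bounds give, for every $T>0$ and every $1\le r<\infty$, that $\{\ue\}$ is bounded in $L^\infty(0,T;\W)\hookrightarrow L^r(0,T;\W)$ and $\{\pt\ue\}$ is bounded in $L^\infty(0,T;L^2(\R^N;\R^N))\hookrightarrow L^r(0,T;L^2(\R^N;\R^N))$. By reflexivity (for $1<r<\infty$) and a diagonal argument over an exhausting sequence $T_j\to\infty$, I extract a sequence $\eps_k\to0$ and a limit $\u$ with $\pt\ue[{\eps_k}]\weak\pt\u$ in $L^r(0,T;L^2)$ and $\ue[{\eps_k}]\weak\u$ in $L^r(0,T;\W)$, giving \eqref{eq:exist1.1} and the second half of \eqref{eq:exist1.2}; moreover $\u\in\mathcal{X}$ since weak-$*$ limits inherit the $L^\infty$-in-time bounds. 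For the strong/a.e. convergence \eqref{eq:exist1} I would invoke an Aubin–Lions–Simon type argument: on a fixed bounded cylinder $\mathcal{K}\times[0,T]$, $\{\ue[{\eps_k}]\}$ is bounded in $L^\infty(0,T;\W)$ and $\{\pt\ue[{\eps_k}]\}$ in $L^\infty(0,T;L^2)$, and since $\W\hookrightarrow\hookrightarrow L^2_{\rm loc}(\R^N;\R^N)$ by Lemma~\ref{eq:Sobolev2} while $L^2_{\rm loc}\hookrightarrow$ (say) $H^{-1}_{\rm loc}$, the compactness theorem yields strong convergence in $L^2_{\rm loc}(\R^N\times(0,\infty);\R^N)$, hence a further subsequence converging a.e.; the a.e. convergence of $\ue[{\eps_k}]$ then also upgrades \eqref{eq:exist1.2}.

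Finally I pass to the limit in the weak formulation. Fix a test function $\v\in C^\infty(\R^{N+1};\R^N)$ with each component compactly supported, say in $B_R(\mathbf 0)\times[0,T']$. Testing \eqref{eq:CPeps} against $\v$ and integrating by parts twice in $t$ and using that $\Delta^2$ is self-adjoint, one gets an identity of the form $\int_0^\infty\int_{\R^N}(\ue\cdot\ptt\v-(K\ue)\cdot\v-\eps\,\ue\cdot\Delta^2\v)\,dt\,d\x-\int_{\R^N}\v_{0,\eps}\cdot\v(\cdot,0)\,d\x+\int_{\R^N}\u_{0,\eps}\cdot\pt\v(\cdot,0)\,d\x=0$. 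In this identity: the term $\int\ue\cdot\ptt\v$ converges to $\int\u\cdot\ptt\v$ because $\ue[{\eps_k}]\to\u$ in $L^2_{\rm loc}$ and $\ptt\v$ is compactly supported; the term $\eps_k\int\ue[{\eps_k}]\cdot\Delta^2\v\to0$ since $\norm{\ue[{\eps_k}]}_{L^2(B_R\times[0,T'])}$ is bounded and $\eps_k\to0$; the initial-data terms converge by \eqref{eq:assinitial_eps}; and the term $\int_0^\infty\int_{\R^N}(K\ue[{\eps_k}])\cdot\v\,dt\,d\x\to\int_0^\infty\int_{\R^N}(K\u)\cdot\v\,dt\,d\x$ by Lemma~\ref{lm:Sobolev1} (applied at a.e.\ time $t$, using \eqref{eq:Sobolev4}$\Rightarrow$\eqref{eq:Sobolev5}) together with a dominated-convergence argument in $t$, the time-integrand being uniformly bounded in $t$ thanks to \eqref{eq:Sobolev3} and the uniform $\W$-bounds. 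This yields \eqref{eq:def}, i.e.\ \eqref{eq:exist2}.

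The main obstacle I anticipate is the passage to the limit in the nonlinear term $\int(K\ue[{\eps_k}])\cdot\v$: one needs to justify interchanging the limit with the time integral, which requires an equi-integrability-in-time estimate for $t\mapsto\int_{\R^N}(K\ue[{\eps_k}](\cdot,t))\cdot\v(\cdot,t)\,d\x$. The bound \eqref{jfierhgiub} from Lemma~\ref{lm:Sobolev1}, combined with the uniform energy bounds, supplies exactly this domination, so the difficulty is really bookkeeping rather than a genuine new idea; the other potentially delicate point is making the formal energy identity for \eqref{eq:CPeps} rigorous, which is handled by the smoothness of $\ue$ guaranteed by the semigroup theory in \cite{EN}.
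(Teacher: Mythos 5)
Your proposal is correct and follows essentially the same route as the paper: the energy identity (Lemma \ref{lm:enest}) and the $L^2$-in-time bound (Lemma \ref{lm:l2}) give uniform bounds in $L^\infty(0,\infty;L^2(\R^N;\R^N))$ and $L^\infty(0,T;\W)$, the compact embedding of Lemma \ref{eq:Sobolev2} yields the convergences \eqref{eq:exist1}--\eqref{eq:exist1.2}, and the limit in the weak formulation is taken by testing \eqref{eq:CPeps}, letting the $\eps_k\Delta^2$-term vanish, and invoking Lemma \ref{lm:Sobolev1} together with \eqref{eq:assinitial_eps}. You are in fact slightly more explicit than the paper on two points it leaves implicit, namely the Aubin--Lions-type use of the time-derivative bound to get strong $L^2_{\rm loc}$ convergence in space-time and the dominated-convergence argument in $t$ for the $K$-term.
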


In order to prove Lemma~\ref{lm:exist} we need the following
preliminary results:

\begin{lemma}[{\bf Energy estimate}]
\label{lm:enest}
Let \ref{ass:f.1}, \ref{ass:f.2} and \ref{ass:f.5} be satisfied.
Then, the following formula holds true: 
\begin{equation}
\label{eq:enest}
\begin{split}
&\frac{\norm{\pt \ue(\cdot,t)}^2_{L^2(\R^N)}
+\eps\norm{\Delta \ue(\cdot,t)}^2_{L^2(\R^N)}}{2}\\
&\quad\quad\quad\quad\quad
+\int_{\R^N}\int_{B_\delta(\mathbf 0)}\Phi\big(\y,\ue(\x,t)-\ue(\x-\y,t)\big)\,d\x\, d\y\\
&\quad=  \frac{\norm{\v_{0,\eps}}^2_{L^2(\R^N)}
+\eps\norm{\Delta \u_{0,\eps}}^2_{L^2(\R^N)}}{2}\\
&\quad\quad\quad\quad\quad
+\frac12\int_{\R^N}\int_{B_\delta(\mathbf 0)}\Phi\big(\y,\u_{0,\eps}(\x)-\u_{0,\eps}(\x-\y)\big)\,d\x\, d\y\le C,
\end{split}
\end{equation}
for every $t\ge0$ and for some constant $C>0$ independent on $\eps$.
\end{lemma}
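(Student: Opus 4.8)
\textbf{Proof plan for Lemma~\ref{lm:enest}.}
The plan is to derive the energy identity by testing the regularized equation in~\eqref{eq:CPeps} with~$\pt\ue$ and integrating over~$\R^N\times[0,t]$; since~$\ue$ is a smooth (classical) solution, all the manipulations below are legitimate. First I would multiply the equation~$\ptt\ue=(K\ue)-\eps\Delta^2\ue$ by~$\pt\ue$ and integrate in~$\x$. The left-hand side produces~$\frac12\frac{d}{dt}\norm{\pt\ue(\cdot,t)}_{L^2}^2$. The biharmonic term, after two integrations by parts (the boundary terms vanishing because~$\ue(\cdot,t)$ and its derivatives are, say, Schwartz-class in~$\x$ for each~$t$, by the semigroup theory invoked after~\eqref{eq:CPeps}), gives~$-\eps\int_{\R^N}\Delta^2\ue\cdot\pt\ue\,d\x=-\frac{\eps}{2}\frac{d}{dt}\norm{\Delta\ue(\cdot,t)}_{L^2}^2$.

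The heart of the computation is the peridynamic term. Using the representation~\eqref{eq:operator1}, namely~$(K\ue)(\x)=-\int_{B_\delta(\mathbf0)}\f(\y,\ue(\x)-\ue(\x-\y))\,d\y$, I would write
\begin{equation*}
\int_{\R^N}(K\ue)(\x)\cdot\pt\ue(\x,t)\,d\x=-\int_{\R^N}\int_{B_\delta(\mathbf0)}\f\big(\y,\U(\x,t)\big)\cdot\pt\ue(\x,t)\,d\y\,d\x,
\end{equation*}
where~$\U(\x,t):=\ue(\x,t)-\ue(\x-\y,t)$. By the change of variables~$\x\mapsto\x+\y$ in half of the integral and the antisymmetry~\ref{ass:f.2} (exactly the symmetrization already carried out in the proof of Lemma~\ref{lm:Sobolev1}), this equals~$-\frac12\int_{\R^N}\int_{B_\delta(\mathbf0)}\f(\y,\U(\x,t))\cdot\pt\U(\x,t)\,d\y\,d\x$. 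Now invoke the hyperelasticity assumption~\ref{ass:f.5}, $\f=\nabla_\u\Phi$, so that~$\f(\y,\U)\cdot\pt\U=\pt\big[\Phi(\y,\U(\x,t))\big]$ by the chain rule. Hence the peridynamic term is~$-\frac12\frac{d}{dt}\int_{\R^N}\int_{B_\delta(\mathbf0)}\Phi(\y,\U(\x,t))\,d\y\,d\x$, and collecting everything yields that the time derivative of the quantity on the first two lines of~\eqref{eq:enest} vanishes; integrating from~$0$ to~$t$ and using that~$\ue(\cdot,0)=\u_{0,\eps}$, $\pt\ue(\cdot,0)=\v_{0,\eps}$ gives the claimed identity (the factor~$\tfrac12$ on the initial-data potential term matches the way the energy~\eqref{eq:enert} is normalized). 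Finally, the bound by a constant~$C$ independent of~$\eps$ follows from the convergence conditions in~\eqref{eq:assinitial_eps}: $\norm{\v_{0,\eps}}_{L^2}\to\norm{\v_0}_{L^2}$, $\sqrt\eps\norm{\Delta\u_{0,\eps}}_{L^2}\to0$, and the potential energies of~$\u_{0,\eps}$ converge to that of~$\u_0$, which is finite by~\eqref{eq:assinit}; hence the right-hand side is uniformly bounded for small~$\eps$.

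The main technical point to be careful about is the justification of all the integrations by parts and of Fubini's theorem: one needs enough spatial decay of~$\ue(\cdot,t)$ to discard boundary terms at infinity for the biharmonic term, and enough integrability to exchange the order of integration and differentiation in the nonlocal term. Both are guaranteed by the regularity of the semigroup solution of the parabolic-type regularization~\eqref{eq:CPeps} together with the integrability already established in Lemma~\ref{lm:Sobolev1} (which shows~$(K\ue)\pt\ue\in L^1(\R^N)$, since~$\ue(\cdot,t)\in\W$ along the flow, as the identity itself shows a posteriori). A minor point is that the chain rule~$\f(\y,\U)\cdot\pt\U=\pt\Phi(\y,\U)$ requires~$\Phi\in C^1$ in~$\u$, which is exactly~\ref{ass:f.5}, and that~$\Phi(\y,\cdot)$ composed with the smooth map~$t\mapsto\U(\x,t)$ is differentiable in~$t$ with the derivative being integrable in~$(\x,\y)$, again a consequence of the growth bounds in~\ref{ass:f.5} and the regularity of~$\ue$.
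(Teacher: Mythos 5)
Your plan coincides with the paper's own proof: multiply \eqref{eq:CPeps} by $\pt\ue$, integrate over $\R^N$, symmetrize the nonlocal term via the change of variables $\x\mapsto\x+\y$ together with \ref{ass:f.2}, use $\f=\nabla_\u\Phi$ from \ref{ass:f.5} to recognize a total time derivative of the potential energy, integrate over $(0,t)$, and bound the resulting right-hand side uniformly in $\eps$ via \eqref{eq:assinitial_eps}. The only discrepancy is cosmetic: the identity you (correctly) obtain carries the factor $\tfrac12$ on both potential-energy terms, exactly as in the paper's computation, whereas the displayed formula \eqref{eq:enest} omits it on the left-hand side, an evident typo in the statement rather than a gap in your argument.
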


\begin{proof}
Multiplying the equation in~\eqref{eq:CPeps} by~$\pt\ue$,
integrating over~$\R^N$ and recalling~\eqref{eq:operator}, we get
\begin{equation}\begin{split}\label{rey5bg}
0=&\int_{\R^N} \ptt \ue \pt \ue \,d\x-
\int_{\R^N}(K\ue(\cdot,t))(\x)\pt \ue\, d\x+\eps\int_{\R^N}\Delta^2\ue \pt \ue \,d\x\\
=&\int_{\R^N} \ptt \ue \pt \ue \,d\x-
\int_{\R^N}(K\ue(\cdot,t))(\x)\pt \ue\, d\x
+\eps\int_{\R^N}\Delta\ue \pt\Delta \ue \,d\x\\
=&\frac{d}{dt}\int_{\R^N}\frac{|\pt \ue|^2+\eps|\Delta\ue|^2}{2}\,d\x\\
&+\int_{\R^N} \int_{B_\delta(\mathbf 0)}\f(\y,\ue(\x,t)-\ue(\x-\y,t))
\pt\ue(\x,t)\, d\x \,d\y
.\end{split}\end{equation}
Now we use~\ref{ass:f.2} to see that 
\begin{align*}
\int_{\R^N} &\int_{B_\delta(\mathbf 0)}\f(\y,\ue(\x,t)-\ue(\x-\y,t))
\pt\ue(\x,t)\, d\x \,d\y\\
=& \frac{1}{2}\int_{\R^N} \int_{B_\delta(\mathbf 0)}\f(\y,\ue(\x,t)-\ue(\x-\y,t))\pt\ue(\x,t)\, d\x\, d\y\\
&+\frac{1}{2}\int_{B_\delta(\mathbf 0)}\left(\int_{\R^N}
\f(\y,\ue(\x,t)-\ue(\x-\y,t))\pt\ue(\x,t)\, d\x\right)\, d\y\\
=&\frac{1}{2}\int_{\R^N} \int_{B_\delta(\mathbf 0)}\f(\y,\ue(\x,t)-\ue(\x-\y,t))
\pt\ue(\x,t) \,d\x\, d\y\\
& +\frac{1}{2}\int_{B_\delta(\mathbf 0)}\left(\int_{\R^N} 
\f(\y,\ue(\z+\y,t)-\ue(\z,t))\pt\ue(\z+\y,t)\, d\z\right) \,d\y\\
=&\frac{1}{2}\int_{\R^N} \int_{B_\delta(\mathbf 0)}\f(\y,\ue(\x,t)
-\ue(\x-\y,t))\pt\ue(\x,t)\, d\x\, d\y\\
& +\frac{1}{2}\int_{\R^N}\left(\int_{B_\delta(\mathbf 0)} \f(-\y,\ue(\z-\y,t)
-\ue(\z,t))\pt\ue(\z-\y,t)\, d\y\right)\, d\z\\
=&
\frac{1}{2}\int_{\R^N} \int_{B_\delta(\mathbf 0)}\f(\y,\ue(\x,t)-
\ue(\x-\y,t))\pt\ue(\x,t) \,d\x\, d\y\\
& -\frac{1}{2}\int_{\R^N}\left(\int_{B_\delta(\mathbf 0)} \f(\y,\ue(\z,t)
-\ue(\z-\y,t))\pt\ue(\z-\y,t)\, d\y\right)\, d\z\\
=& \frac{1}{2}\int_{\R^N} \int_{B_\delta(\mathbf 0)}
\f(\y,\ue(\x,t)-\ue(\x-\y,t))\Big(\pt\ue(\x,t)-\pt\ue(\x-\y,t)\Big)\, d\x\, d\y.
\end{align*}
Plugging this information into~\eqref{rey5bg}, we conclude that
\begin{align*}
0=&\frac{d}{dt}\int_{\R^N}\frac{|\pt \ue|^2+\eps|\Delta\ue|^2}{2}\,d\x\\
& +\frac{1}{2}\int_{\R^N} \int_{B_\delta(\mathbf 0)}
\f(\y,\ue(\x,t)-\ue(\x-\y,t))\Big(\pt\ue(\x,t)-\pt\ue(\x-\y,t)\Big)\, d\x\, d\y.
\end{align*}
As a consequence, using~\ref{ass:f.5},
\begin{align*}
0=&\frac{d}{dt}\int_{\R^N}\frac{|\pt \ue|^2+\eps|\Delta\ue|^2}{2}\,d\x\\
& +\frac{1}{2}\int_{\R^N} \int_{B_\delta(\mathbf 0)}
\nabla_u\Phi(\y,\ue(\x,t)-\ue(\x-\y,t))\Big(\pt\ue(\x,t)-\pt\ue(\x-\y,t)\Big)\, d\x\, d\y\\
=&\frac{d}{dt}\left[\int_{\R^N}\frac{|\pt \ue|^2+\eps|\Delta\ue|^2}{2}\,d\x
+\frac{1}{2}\int_{\R^N} \int_{B_\delta(\mathbf 0)}\Phi(\y,\ue(\x,t)-\ue(\x-\y,t))
\,d\x\, d\y\right].
\end{align*}
Hence, an integration over $(0,t)$ gives the desired equality in~\eqref{eq:enest}.
Furthermore, the boundedness of the quantity in~\eqref{eq:enest}
follows from the convergence assumptions in~\eqref{eq:assinitial_eps}.
The proof of Lemma~\ref{lm:enest} is thus complete.
\end{proof}

\begin{lemma}[{\bf $L^2-$estimate}]
\label{lm:l2}
Let~\ref{ass:f.1}, \ref{ass:f.2}, and \ref{ass:f.5} be satisfied.
Then, the following estimate holds true:
\begin{equation}
\label{eq:l2}
\norm{\ue(\cdot,t)}_{L^2(\R^N)}\le C(1+t),
\end{equation}
for every $t\ge0$ and for some constant $C>0$
independent on~$\eps$.
\end{lemma}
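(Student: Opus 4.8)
The plan is to control $\norm{\ue(\cdot,t)}_{L^2(\R^N)}$ by integrating the identity $\pt\ue = \v_{0,\eps}+\int_0^t\ptt\ue(\cdot,s)\,ds$ in a weak, $L^2$-oriented way, and then bootstrapping via $\ue(\cdot,t) = \u_{0,\eps}+\int_0^t\pt\ue(\cdot,s)\,ds$. Concretely, first I would observe that the energy estimate of Lemma~\ref{lm:enest}, together with the lower bound $\Phi\ge\kappa|\u|^p/|\y|^{N+\alpha p}\ge0$ coming from \ref{ass:f.5}, gives a uniform (in $\eps$ and $t$) bound on $\norm{\pt\ue(\cdot,t)}_{L^2(\R^N)}$. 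Indeed, discarding the nonnegative $\eps\norm{\Delta\ue}^2$ and $\Phi$-terms on the left of~\eqref{eq:enest} yields $\norm{\pt\ue(\cdot,t)}_{L^2(\R^N)}^2\le 2C$ for all $t\ge0$.

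With the velocity bound in hand, the second step is simply to write, for a.e.\ $\x$,
\begin{equation*}
\ue(\x,t) = \u_{0,\eps}(\x)+\int_0^t \pt\ue(\x,s)\,ds,
\end{equation*}
which is legitimate since $\ue$ is the smooth solution of~\eqref{eq:CPeps}. Taking $L^2(\R^N)$ norms and using Minkowski's integral inequality,
\begin{equation*}
\norm{\ue(\cdot,t)}_{L^2(\R^N)} \le \norm{\u_{0,\eps}}_{L^2(\R^N)} + \int_0^t \norm{\pt\ue(\cdot,s)}_{L^2(\R^N)}\,ds \le \norm{\u_{0,\eps}}_{L^2(\R^N)} + \sqrt{2C}\,t.
\end{equation*}
Finally, by the convergence assumptions~\eqref{eq:assinitial_eps} the quantities $\norm{\u_{0,\eps}}_{L^2(\R^N)}$ are bounded uniformly in $\eps$, so absorbing everything into a single constant gives $\norm{\ue(\cdot,t)}_{L^2(\R^N)}\le C(1+t)$, as desired.

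The only mild subtlety — and the point I would be most careful about — is the justification of moving the $L^2$ norm inside the time integral (Minkowski's inequality for Banach-space-valued integrals) and, relatedly, making sure that the smooth solution $\ue$ provided by the semigroup theory for~\eqref{eq:CPeps} genuinely has $\pt\ue\in C([0,T];L^2(\R^N))$ so that the fundamental theorem of calculus applies in $L^2$. Both are standard: the $\eps\Delta^2$ regularization places the problem in a classical linear-plus-Lipschitz-perturbation semigroup framework (cf.~\cite{EN}), which delivers enough regularity. No other obstacle is expected; the heart of the argument is just the a priori velocity bound from the energy identity, which has already been established in Lemma~\ref{lm:enest}.
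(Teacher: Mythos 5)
Your proposal is correct and follows essentially the same route as the paper: both arguments rest on the uniform velocity bound $\norm{\pt\ue(\cdot,t)}_{L^2(\R^N)}\le C$ extracted from the energy estimate of Lemma~\ref{lm:enest} together with writing $\ue(\cdot,t)=\u_{0,\eps}+\int_0^t\pt\ue(\cdot,s)\,ds$ and the uniform bound on $\norm{\u_{0,\eps}}_{L^2(\R^N)}$ from~\eqref{eq:assinitial_eps}. The only (immaterial) difference is that you pass the $L^2$ norm inside the time integral via Minkowski's integral inequality, whereas the paper applies Cauchy--Schwarz in time pointwise in $\x$, squares, and integrates over $\R^N$.
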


\begin{proof}
We observe that
\begin{equation}\label{ieghegb57t7000}\begin{split}
|\ue (\x,t)|\le&|\u_{0,\eps}(\x)|+\int_0^t |\pt \ue(\x,s)|\,ds\\
\le&|\u_{0,\eps}(\x)|+\sqrt{t}\sqrt{\int_0^t |\pt \ue(\x,s)|^2\,ds}.\end{split}
\end{equation}
Taking the square of both sides of~\eqref{ieghegb57t7000}
and integrating over $\R^n$, we have that
\begin{align*}
\int_{\R^N}|\ue (\x,t)|^2\,d\x\le&\>
2\int_{\R^N}|\u_{0,\eps}(\x)|^2\,d\x+2t\int_0^t \int_{\R^N}|\pt \ue(\x,s)|^2\,ds\,d\x\\
\le&\> 2\norm{\u_{0,\eps}}_{L^2(\R^N)}^2+
2t^2\sup_{t\ge0}\norm{\pt \ue(\cdot,t)}_{L^2(\R^N)}^2.
\end{align*}
Therefore, the desired
estimate follows from~\eqref{eq:assinitial_eps} and~\eqref{eq:enest}.
\end{proof}

\begin{proof}[Proof of Lemma \ref{lm:exist}]
We notice that, by virtue of Lemma~\ref{lm:enest},
\begin{equation}\label{orgeirhgrhr1}
\text{$\{\pt\ue\}_\eps$ is a bounded sequence
in $L^\infty(0,\infty;L^2(\R^N;\R^N))$}.\end{equation}
Furthermore, using again
Lemmas~\ref{lm:enest} and~\ref{lm:l2} and 
assumption~\ref{ass:f.5} we obtain that
\begin{equation}\label{orgeirhgrhr2}
\text{$\left\{\ue\right\}_\eps$ is a bounded sequence
in $L^\infty(0,T;\W)$ for every $T>0$}.
\end{equation}
Therefore, by Lemma~\ref{eq:Sobolev2}, we have that
there exist a subsequence~$\{\u_{\eps_k}\}_k$ and a
function~$\u\in L^2_{\rm{loc}}(\R^N;\R^N)$
such that~\eqref{eq:exist1} holds true.

As a matter of fact, by virtue of~\eqref{orgeirhgrhr1} and~\eqref{orgeirhgrhr2}
we have that~$\u\in L^\infty(0,T;\W)$ for every $T>0$,
and~$\{\pt\ue\}_\eps\in L^\infty(0,\infty;L^2(\R^N;\R^N))$.
That is, recalling the definition of~$\mathcal{X}$ in~\eqref{eq:Sobolev6},
the function~$\u\in \mathcal{X}$. 
Thus, condition~\ref{def:1} in
Definition~\ref{def:sol} holds true for $\u$.

Hence, to prove \eqref{eq:exist2}, we now focus on proving that $\u$ satisfies ~\ref{def:3} in
Definition~\ref{def:sol}.
To this aim, let~$\v\in C^\infty(\R^{N+1};\R^N)$
be a given test function such that every component has compact support. 
Multiplying~\eqref{eq:CPeps} by~$\v$ and integrating
over~$(0,\infty)\times \R^N$, we get
\begin{equation}\begin{split}\label{reh45y12135476}
\int_0^\infty\int_{\R^N}&
\left(\u_{\eps_k}(\x,t)\cdot\ptt \v(\x,t)-
(K\u_{\eps_k}(\cdot,t))(\x)\cdot \v(\x,t)\right)\,dt\,d\x\\
& -\int_{\R^N}\v_{0,\eps_k}(\x)\cdot\v(\x,0)\,d\x
+\int_{\R^N}\u_{0,\eps_k}(\x)\cdot\pt\v(\x,0)\,d\x\\
=&\;-\eps_k\int_0^\infty\int_{\R^N}\u_{\eps_k}(\x,t)\cdot\Delta^2 \v(\x,t)\,dt\,d\x.
\end{split}\end{equation}
Hence, sending~$k\to\infty$ in~\eqref{reh45y12135476} and using
Lemma~\ref{lm:Sobolev1} and formula~\eqref{eq:assinitial_eps},
we obtain~\ref{def:3}. 

Therefore, $\u$ is a weak solution
of~\eqref{eq:CPeps} in the sense of Definition \ref{def:sol}. 
This proves~\eqref{eq:exist2},
and so the proof of Lemma \ref{lm:exist} is complete.
\end{proof}

\begin{proof}[Proof of Theorem \ref{th:existence}]
Thanks to Lemma \ref{lm:exist},
in order to complete the proof of Theorem \ref{th:existence},
it remains to prove that~$\u$ satisfies~\eqref{eq:energydecay}.
For this,  we use~\ref{ass:f.5} and~\eqref{eq:enest} to see that
\begin{equation}
\label{eq:endec1}
\begin{split}
&\frac{\norm{\pt \uek(\cdot,t)}^2_{L^2(\R^N)}}{2}
+\frac{\kappa}2\int_{\R^N}\int_{B_\delta(\mathbf 0)}
\frac{|\uek(\x)-\uek(\x-\y)|^{p}}{|\y|^{N+\alpha p}}\,d\x\, d\y\\
&\qquad \qquad+\frac12 \int_{B_R(\mathbf 0)}\int_{B_\delta(\mathbf 0)}\Psi
\big(\y,\uek(\x,t)-\uek(\x-\y,t)\big)\,d\x\, d\y\\
&\qquad\le  \frac{\norm{\v_{0,\epk}}^2_{L^2(\R^N)}+
\epk\norm{\Delta \u_{0,\epk}}^2_{L^2(\R^N)}}{2}\\
&\qquad \qquad+\frac12 \int_{\R^N}\int_{B_\delta(\mathbf 0)}
\Phi\big(\y,\u_{0,\epk}(\x)-\u_{0,\epk}(\x-\y)\big)\,d\x\, d\y,
\end{split}
\end{equation}
for every $R>0$.
Now, making use of~\eqref{eq:assinitial_eps} and recalling~\eqref{eq:enert},
we can say that
\begin{equation}
\label{eq:endec2}
\begin{split}
\lim_{k}&\left(\frac{\norm{\v_{0,\epk}}^2_{L^2(\R^N)}
+\epk\norm{\Delta \u_{0,\epk}}^2_{L^2(\R^N)}}{2}\right.\\
&\qquad \qquad\left.
+\int_{\R^N}\int_{B_\delta(\mathbf 0)}\Phi(\y,\u_{0,\epk}(\x)
-\u_{0,\epk}(\x-\y))\,d\x\, d\y\right)\\
=&
\frac{\norm{\pt \u(\cdot,t)}^2_{L^2(\R^N)}}{2}
+\frac{1}{2}\int_{\R^N}\int_{B_\delta(\mathbf 0)}
\Phi(\y,\u(\x,t)-\u(\x-\y,t))\,d\x\, d\y=E[\u](0).
\end{split}
\end{equation}
Moreover, the Dominated Convergence Theorem,
\ref{ass:f.5} and~\eqref{eq:exist1} give that, for a.e.~$t>0$,
\begin{equation}
\label{eq:endec3}
\begin{split}
\lim_{k}&\norm{\pt \uek(\cdot,t)}^2_{L^2(\R^N)}\ge \norm{\pt \u(\cdot,t)}^2_{L^2(\R^N)},\\
\lim_{k}&\int_{B_R(\mathbf 0)}
\int_{B_\delta(\mathbf 0)}\Psi(\y,\uek(\x,t)-\uek(\x-\y,t))\,d\x\, d\y\\
&=\int_{B_R(\mathbf 0)}\int_{B_\delta(\mathbf 0)}\Psi(\y,\u(\x,t)-\u(\x-\y,t))\,d\x\, d\y,\\
\liminf_{k}&\int_{\R^N}\int_{B_\delta(\mathbf 0)}\frac{|\uek(\x)-\uek(\x-\y)|^{p}}{|\y|^{N+\alpha p}}d\x d\y\\
&\ge 
\int_{\R^N}\int_{B_\delta(\mathbf 0)}\frac{|\u(\x)-\u(\x-\y)|^{p}}{
|\y|^{N+\alpha p}}\,d\x\, d\y.
\end{split}
\end{equation}
Therefore, sending $k\to\infty$ in \eqref{eq:endec1} we get
\begin{equation}
\label{eq:endec4}
\begin{split}
\frac{\norm{\pt \u(\cdot,t)}^2_{L^2(\R^N)}}{2}
&+\frac{\kappa}2 \int_{\R^N}\int_{B_\delta(\mathbf 0)}
\frac{|\u(\x)-\u(\x-\y)|^{p}}{|\y|^{N+\alpha p}}\,d\x\, d\y\\
&+\frac{1}{2}\int_{B_R(\mathbf 0)}\int_{B_\delta(\mathbf 0)}
\Psi(\y,\u(\x,t)-\u(\x-\y,t))\,d\x\, d\y
\le E[\u](0).
\end{split}
\end{equation}
Sending $R\to\infty$ in~\eqref{eq:endec4} and recalling again~\eqref{eq:enert},
we gain \eqref{eq:energydecay}, as desired.
\end{proof}

\section{Uniqueness, stability and
proof of Theorem \ref{th:stability}}
\label{sec:stab}

This section is devoted to the proof of Theorem \ref{th:stability}.

\begin{proof}[Proof of Theorem \ref{th:stability}]
Let $\u$ and $\wu$ be two weak solutions of \eqref{eq:CP} 
according to Definition~\ref{def:sol}, and define
\begin{equation}\label{defw00}
\w:=\u-\wu.
\end{equation}
We claim that~$\w$ solves the equation
\begin{equation}
\label{eq:w}
\begin{split}
\ptt \w(\x,t)=&-2\kappa\int_{B_\delta(\mathbf 0)}
\frac{\w(\x,t)-\w(\x-\y,t)}{|\y|^{N+2\alpha}}\,d\y \\
&
-\int_{B_\delta(\mathbf 0)}\int_0^1 {\mathcal{F}}(\theta,\x,\y,t)
\big(\w(\x,t)-\w(\x-\y,t)\big)\,d\y \,d\theta.
\end{split}
\end{equation}
where
\begin{equation}\label{132fhbvhsalxm}
{\mathcal{F}}(\theta,\x,\y,t):=D^2_{\u}\Psi\Big
(\y,\theta\big(\u(\x,t)-\u(\x-\y,t)\big)+(1-\theta)
\big(\wu(\x,t)-\wu(\x-\y,t)\big)\Big).
\end{equation}
To prove~\eqref{eq:w}, we recall~\eqref{p2}
and we use~\eqref{eq:CP}, \eqref{eq:operator},
\eqref{ass:f.2}
and~\ref{ass:f.5} to see that
\begin{align*}
\ptt \w(\x,t)=&\ptt \u(\x,t)-\ptt \wu(\x,t)\\
=&(K\u(\cdot,t))(\x)-(K\wu(\cdot,t))(\x)\\
=& \int_{B_\delta(\mathbf 0)}
\Big(\f(-\y,\u(\x-\y,t)-\u(\x,t))-\f(-\y,\wu(\x-\y,t)-\wu(\x,t))\Big)\,d\y\\
=&-\int_{B_\delta(\mathbf 0)}
\Big(\f(\y,\u(\x,t)-\u(\x-\y,t))-\f(\y,\wu(\x,t)-\wu(\x-\y,t))\Big)\,d\y\\
=&-2\kappa\int_{B_\delta(\mathbf 0)}
\frac{\w(\x,t)-\w(\x-\y,t)}{|\y|^{N+2\alpha}}\,d\y\\
& -\int_{B_\delta(\mathbf 0)}\Big(\nabla_\u\Psi\big(\y,\u(\x,t)-
\u(\x-\y,t)\big)-\nabla_\u\Psi\big(\y,\wu(\x,t)-\wu(\x-\y,t)\big)\Big)\,d\y\\
=&-2\kappa\int_{B_\delta(\mathbf 0)}
\frac{\w(\x,t)-\w(\x-\y,t)}{|\y|^{N+2\alpha}}\,d\y \\
&-\int_{B_\delta(\mathbf 0)}\int_0^1 {\mathcal{F}}
(\theta,\x,\y,t)\big(\w(\x,t)-\w(\x-\y,t)\big)\,d\y\, d\theta,
\end{align*}
where~${\mathcal{F}}$ is defined in~\eqref{132fhbvhsalxm}.
This proves~\eqref{eq:w}. 

Now, we multiply~\eqref{eq:w} by $\pt\w$ and we integrate over~$\R^N$.
In this way, making again use of~\ref{ass:f.2} and~\ref{ass:f.5}, we get
\begin{align*}
0=&\int_{\R^N} \ptt \w \,\pt \w\, d\x
+2\kappa\int_{\R^N}\int_{B_\delta(\mathbf 0)}
\frac{\w(\x,t)-\w(\x-\y,t)}{|\y|^{N+2\alpha}}\,\pt \w(\x,t)\,d\x\, d\y \\
&+\int_{\R^N}\int_{B_\delta(\mathbf 0)}\int_0^1
{\mathcal{F}}(\theta,\x,\y,t)\big(\w(\x,t)-\w(\x-\y,t)\big)\pt \w(\x,t)\,d\x\, d\y\, d\theta\\
=&\int_{\R^N} \ptt \w\, \pt \w\, d\x
+\kappa\int_{\R^N}\int_{B_\delta(\mathbf 0)}
\frac{\w(\x,t)-\w(\x-\y,t)}{|\y|^{N+2\alpha}}\,
\pt \big(\w(\x,t)-\w(\x-\y,t)\big)\,d\x\, d\y \\
&+\int_{\R^N}\int_{B_\delta(\mathbf 0)}
\int_0^1 {\mathcal{F}}(\theta,\x,\y,t)\big(\w(\x,t)-\w(\x-\y,t)\big)
\pt \w(\x,t)\,d\x\, d\y\, d\theta\\
=&\frac{d}{dt}\int_{\R^N}
\frac{|\pt \w|^2}{2}\,d\x+\frac{\kappa}{2}\frac{d}{dt}
\int_{\R^N}\int_{B_\delta(\mathbf 0)}
\frac{|\w(\x,t)-\w(\x-\y,t)|^2}{|\y|^{N+2\alpha}}\,d\x\, d\y \\
&+\int_{\R^N}\int_{B_\delta(\mathbf 0)}
\int_0^1 {\mathcal{F}}(\theta,\x,\y,t)\big(\w(\x,t)-\w(\x-\y,t)\big)
\pt \w(\x,t)\,d\x\, d\y\, d\theta.
\end{align*}
Therefore, thanks to \ref{ass:f.5},
\begin{align*}
\frac{d}{dt}&\left(\int_{\R^N}\frac{|\pt \w|^2}{2}\,d\x
+\frac{\kappa}{2}
\int_{\R^N}\int_{B_\delta(\mathbf 0)}
\frac{|\w(\x,t)-\w(\x-\y,t)|^2}{|\y|^{N+2\alpha}}\,d\x\, d\y\right)\\
=&-\int_{\R^N}\int_{B_\delta(\mathbf 0)}\int_0^1
{\mathcal{F}}(\theta,\x,\y,t)\big(\w(\x,t)-\w(\x-\y,t)\big)\pt \w(\x,t)\,d\x\, d\y\, d\theta\\
\le &\int_{\R^N}\int_{B_\delta(\mathbf 0)} 
g(\y)\big|\w(\x,t)-\w(\x-\y,t)\big|\;|\pt \w(\x,t)|\,d\x\, d\y\\
\le &\frac{1}{2}\underbrace{\left(
\int_{B_\delta(\mathbf 0)} g^2(\y)|\y|^{N+2\alpha}\,d\y\right)}_{\lambda}
\int_{\R^N}|\pt \w(\x,t)|^2\,d\x\\
&+\frac{1}{2}\int_{\R^N}\int_{B_\delta(\mathbf 0)}
\frac{|\w(\x,t)-\w(\x-\y,t)|^2}{|\y|^{N+2\alpha}} \,d\x\, d\y\\
\le &\left(\lambda+\frac1{\kappa}\right)
\left(\int_{\R^N}\frac{|\pt \w(\x,t)|^2}2\,d\x
+\frac{\kappa}{2}\int_{\R^N}\int_{B_\delta(\mathbf 0)}
\frac{|\w(\x,t)-\w(\x-\y,t)|^2}{|\y|^{N+2\alpha}}\, d\x\, d\y\right).
\end{align*}
Hence, applying the Gronwall Lemma, we conclude that
\begin{align*}
\int_{\R^N}&\frac{|\pt \w(\x,t)|^2}2\,d\x
+\frac{\kappa}{2}\int_{\R^N}\int_{B_\delta(\mathbf 0)}
\frac{|\w(\x,t)-\w(\x-\y,t)|^2}{|\y|^{N+2\alpha}}\, d\x\, d\y
\\\le& 
e^{\left(\lambda+\frac1\kappa\right)t}
\left(\int_{\R^N}\frac{|\pt \w(\x,0)|^2}2\,d\x
+\frac{\kappa}{2}\int_{\R^N}\int_{B_\delta(\mathbf 0)}
\frac{|\w(\x,0)-\w(\x-\y,0)|^2}{|\y|^{N+2\alpha}}\, d\x\, d\y\right).
\end{align*}
Consequently, recalling~\eqref{defw00}, we obtain~\eqref{eq:stability},
as desired.
\end{proof}



\begin{thebibliography}{99}

\bibitem{B}
J. M. Ball,
\textit{Convexity conditions and existence theorems in nonlinear elasticity},
Arch. Rational Mech. Anal. 63 (1976/77), no. 4, 337-403.

\bibitem{DPV}
E. Di Nezza, G. Palatucci, E. Valdinoci,
\textit{Hitchhiker's guide to the fractional Sobolev spaces},
Bull. Sci. Math. 136 (2012), no. 5, 521-573.

\bibitem{PAT}
S. Dipierro, S. Patrizi, E. Valdinoci, \textit{Heteroclinic connections for nonlocal
equations},
ArXiv:1711.01491.

\bibitem{DZ}
Q. Du, K. Zhou,
\textit{Mathematical analysis for the peridynamic nonlocal continuum theory},
ESAIM Math. Model. Numer. Anal. 45 (2011), no. 2, 217-234.

\bibitem{ELP} E.  Emmrich, R. B. Lehoucq, D. Puhst,
\textit{Peridynamics: a nonlocal continuum theory}.  
In: Griebel, M., Schweitzer, M.A. (eds.) Meshfree
Methods for Partial Differential Equations VI. Lect. N. Comput. Sci.
Eng., vol. 89. Springer, Berlin (2013).

\bibitem{EP1}
E. Emmrich, D. Puhst, \textit{Well-posedness
of the peridynamic model with Lipschitz continuous pairwise force function},
Commun. Math. Sci. 11 (2013), no. 4, 1039-1049.

\bibitem{EP} E. Emmrich, D. Puhst,
\textit{Survey of Existence Results in Nonlinear
Peridynamics in Comparison with Local
Elastodynamics}, Comput. Methods Appl. Math. 15 (2015), no. 4, 483-496.

\bibitem{EP2}
E. Emmrich, D. Puhst, \textit{Measure-valued and
weak solutions to the nonlinear peridynamic model in nonlocal elastodynamics},
Nonlinearity 28 (2015), 285-307.

\bibitem{EW}
E. Emmrich, O. Weckner, \textit{On the well-posedness
of the linear peridynamic model and its convergence towards the
Navier equation of linear elasticity}, Commun. Math. Sci. 5
(2007), no. 4, 851-864.

\bibitem{EN}
K.-J. Engel, R. Nagel. \textit{One-parameter
semigroups for linear evolution equations},
Graduate Texts in Mathematics, 194. Springer-Verlag, New York (2000).

\bibitem{Er} A. C. Eringen, \textit{Nonlocal Continuum Field Theories},
Springer-Verlag, New York Berlin Heidelberg (2002).

\bibitem{EE}  A. C. Eringen, D. G. B. Edelen,
\textit{On nonlocal elasticity}, Int. J. Eng. Sci. 10 (1972), no. 3, 233-248.

\bibitem{K}  E. Kr\"oner, \textit{Elasticity theory of
materials with long range cohesive forces}, 
Int. J. Solids Structures 3 (1967), 731-742.

\bibitem{Ku} I. A. Kunin, \textit{Elastic Media with Microstructure},
Voll. I, II, Springer-Verlag Berlin Heidelberg (1982).

\bibitem{MD}  T. Mengesha, Q. Du, \textit{Characterization of function spaces of vector fields and an application in nonlinear peridynamics}, 
Nonlinear Anal. 140 (2016), 82-111.

\bibitem{Sid} T. C. Sideris, \textit{Nonresonance and global existence of prestressed nonlinear elastic waves}, 
Ann. of Math. (2) 151 (2000), no. 2, 849-874.

\bibitem{S1} S. A. Silling, \textit{Reformulation of elasticity
theory for discontinuities and long-range forces}, 
J. Mech. Phys. Solids 48 (2000), 175-209.

\bibitem{S} S. A. Silling, \textit{Linearized theory of peridynamic states}, 
J. Elasticity 99 (2010),
85-111.

\bibitem{SEWX} S. A. Silling, M. Epton, O. Weckner, J. Xu, E. Askari,
\textit{Peridynamic states
and constitutive modeling}, J. Elasticity 88 (2007), 151-184.

\bibitem{SL} S. A. Silling, R. B. Lehoucq, \textit{Convergence
of peridynamics to classical elasticity
theory}, J. Elasticity 93 (2008), no. 1, 13-37.

\bibitem{SL1} S. A. Silling, R. B. Lehoucq, \textit{Peridynamic Theory
of Solid Mechanics}, Adv.
Appl. Mech. 44 (2010), 73-168.

\end{thebibliography}
\end{document}